\title{Integral points on cubic twists of Mordell curves}
\author{Stephanie Chan}
\date{\today}
\address{Department of Mathematics, University of Michigan}
\email{ytchan@umich.edu}
\newcommand{\QQ}{\mathbb{Q}}
\newcommand{\ZZ}{\mathbb{Z}}
\newcommand{\leg}[2]{\left(\frac{#1}{#2}\right)}
\DeclareMathOperator{\GL}{GL}
\newtheorem{theorem}{Theorem}[section]
\newtheorem{corollary}[theorem]{Corollary}
\newtheorem{lemma}[theorem]{Lemma}
\newtheorem{remark}[theorem]{Remark}
\newtheorem{proposition}[theorem]{Proposition}
\begin{document}
\begin{abstract}Fix a non-square integer $k\neq 0$.
We show that the number of curves $E_B:y^2=x^3+kB^2$ containing an integral point, where $B$ ranges over positive integers less than $N$, is bounded by $O_k(N(\log N)^{-\frac{1}{2}+\epsilon})$. In particular, this implies that the number of positive integers $B\leq N$ such that $-3kB^2$ is the discriminant of an elliptic curve over $\QQ$ is $o(N)$. The proof involves a discriminant-lowering procedure on integral binary cubic forms. \end{abstract}

\maketitle

\section{Introduction}
Fixing a non-zero integer $k$, Mordell's equation
\[E:y^2=x^3+k\]
defines an elliptic curve over $\QQ$.
We consider the cubic twist family of $E$, which contains elliptic curves defined by the Weierstrass equation
\[E_B:y^2=x^3+kB^2,\] 
where $B$ ranges over positive integers.
The set of integral points is defined as
\[E_B(\ZZ)\coloneqq \{(x,y)\in\ZZ^2:y^2=x^3+kB^2\}.\]
Ordering $E_B$ by the size of $B$, we are interested in counting the number of curves which has an integral point.

\begin{theorem}\label{theorem:curvebd}
Fix a non-zero integer $k$ that is not a rational square. We have
\[\#\{1\leq B\leq N:E_B(\ZZ)\neq \varnothing\}\ll_k N\left(\frac{\log\log N}{\log N}\right)^{\frac{1}{2}}.\]
\end{theorem}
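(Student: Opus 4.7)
The plan is to associate to each integer point $P = (x, y) \in E_B(\ZZ)$ an integral binary cubic form $F_P$ of discriminant a fixed multiple of $k B^2$, and then to bound the number of such $B$ by a descent on the forms $F_P$. Concretely, factoring $y^2 - k B^2 = x^3$ in a quadratic order $\mathcal{O} \subset \QQ(\sqrt{k})$, the ideal $(y - B \sqrt{k})$ is a cube $\mathfrak{a}^3$. Writing $\mathfrak{a} = (\beta) \cdot \mathfrak{a}_0$ with $\mathfrak{a}_0$ in a finite set of representatives of $\mathrm{Cl}[3](\mathcal{O})$, the coefficient of $\sqrt{k}$ in $\alpha_0 \beta^3$, viewed as a polynomial in the $\ZZ$-coordinates of $\beta$, is the sought cubic form $F_P$. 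Via the Delone--Faddeev correspondence $F_P$ parameterizes a cubic ring of discriminant proportional to $k B^2$.

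The key technical ingredient is a \emph{discriminant-lowering} step: if $F_P$ corresponds to a cubic ring $R$ that is non-maximal at a prime $p \mid B$, one passes to the maximization of $R$ at $p$ and obtains a new integer cubic form $F'$ with discriminant reduced by $p^2$. Iterating, every $F_P$ descends to a ``reduced'' form $F_0$ whose cubic ring is the full maximal order $\mathcal{O}_K$ of a cubic field $K$, so that $\mathrm{disc}\, K = -3 k B_0^2$ for some $B_0 \mid B$ whose prime divisors are exactly the primes ramified in $K$.

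To count, we first bound the number of admissible \emph{cores} $(B_0, K)$. A cubic field $K$ with $\mathrm{disc}\, K = -3 k B_0^2$ corresponds, via class field theory over the quadratic resolvent $\QQ(\sqrt{-3k})$, to a nontrivial $3$-torsion element in a ring class group of conductor $B_0$. By Chebotarev, this forces the prime factors of $B_0$ into a Dirichlet density-$\tfrac{1}{2}$ set (typically, the primes that split in $\QQ(\sqrt{-3k})$), so Landau--Selberg--Delange gives $O(N / \sqrt{\log N})$ admissible cores. The extra $(\log \log N)^{1/2}$ factor in the theorem is contributed by the reconstruction step $B = B_0 \cdot M$ with $M$ composed of unramified primes, combined with a Tur\'an-type tail bound on the number of prime divisors of $B$.

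The main technical obstacle will be making the discriminant-lowering operation rigorous in full generality. This requires a $p$-adic orbit analysis of $\GL_2(\ZZ_p)$ on integral binary cubic forms, with careful attention to edge cases at primes dividing $6k$, at primes of wild ramification in the associated cubic field, and when $F_P$ is reducible or degenerate modulo $p$. Once this local analysis is in place, the global count via Chebotarev and Landau--Selberg--Delange is standard.
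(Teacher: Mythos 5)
Your outline gestures at the right circle of ideas (cubic forms of discriminant proportional to $kB^2$, a discriminant-lowering step, a density-$\tfrac12$ Frobenian condition on certain primes dividing $B$, Landau--Selberg--Delange), but as written it has a gap at the step that carries all the weight: the passage from ``few admissible cores'' to ``few admissible $B$''. Your plan bounds the number of cores $(B_0,K)$ with $\operatorname{disc} K=-3kB_0^2$, and then reconstructs $B=B_0M$. But nothing forces $B_0$ to be a large divisor of $B$: the index $[\mathcal{O}_K:R]$ of the cubic ring attached to $F_P$ can absorb essentially all of $B$ (already when $\gcd(x,B)=1$ the lowered form has discriminant $O_k(1)$ and the core is trivial), and then $M$ ranges over almost all integers up to $N$ with no constraint on its prime factors, so $\sum_{B_0}\#\{M\le N/B_0\}$ gives no saving. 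A Tur\'an-type bound on $\omega(B)$ does not repair this. The paper's proof spends most of its effort precisely here: after the (explicit, elementary) discriminant-lowering, it reduces the forms, uses the syzygy $u^2-kg_1^2a^2=g_0h^3$ to bound the number of reduced forms with $\gcd(x,B)=g_0$ in a dyadic range, and invokes Thue-inequality counts (Thunder, Evertse) to bound the points per form, obtaining $\ll_k N^{2/3}L^{1/3}(\log N)^{O(1)}$ for the curves with $g_0\le L=N(\log N)^{-11/2}$; only the complementary large-$g_0$ regime is handled by the Frobenian density argument, and it is the combination of the two that yields $N(\log\log N/\log N)^{1/2}$. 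Without an analogue of the small-core count, your argument cannot reach even $o(N)$.

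Two further points. First, the very first step of your descent is not clean: $(y-B\sqrt{k})$ and $(y+B\sqrt{k})$ share ideal factors at primes dividing $\gcd(x,2kB)$, so the former is not in general an ideal cube, and for $k>0$ the generator $\beta$ is only determined up to the infinite unit group; both issues concentrate exactly at the quantity $\gcd(x,B)$ that the whole proof must control, so they cannot be deferred as ``edge cases''. Second, the density-$\tfrac12$ set is misidentified: a prime $p\nmid 6k$ totally ramified in the cubic field satisfies $\leg{-3}{p}=\leg{-3k}{p}$, i.e.\ $\leg{k}{p}=1$ --- it splits in $\QQ(\sqrt{k})$, not in the resolvent $\QQ(\sqrt{-3k})$; this matches the condition the paper extracts directly from the quadratic form $u^2-ka'^2$ and does not change the shape of the Landau bound, but it should be stated correctly.
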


Theorem~\ref{theorem:curvebd} shows that, when $k\neq 0$ is not a square, almost all elliptic curves in this family $E_B$ does not have any integral points. We are also interested in bounding the average number of integral points in this family. Alpoge~\cite{Alpoge} proved that the average number of integral points is bounded in the family of all minimal short Weierstrass equations of elliptic curves, as well as in several other special families. In this direction, we show that the average number of integral points is also bounded in this family $E_B$ when we restrict $B$ to cube-free integers, so that $E_B$ come from distinct isomorphism classes over $\QQ$.
\begin{theorem}\label{theorem:ptbd}
Fix a non-zero integer $k$ that is not a rational square.  We have
\[\sum_{\substack{1\leq B\leq N\\ B\text{ cube-free}}}\#E_B(\ZZ)\ll_k N.\]
\end{theorem}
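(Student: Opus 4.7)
The plan is to parametrize integral points by solutions of a simpler Diophantine equation in the quadratic field $K=\QQ(\sqrt k)$ and then estimate by geometry of numbers.

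Starting from an integral point $(x_0,y_0)\in E_B(\ZZ)$, I factor
\[
  x_0^3 \;=\; (y_0-B\sqrt k)(y_0+B\sqrt k)
\]
in the ring of integers $\mathcal O_K$. Away from the finitely many primes of $\mathcal O_K$ above $6k$, the two factors on the right are coprime, so the ideal $(y_0+B\sqrt k)$ is a perfect cube. Since this ideal is principal, the class of its cube root lies in the $3$-torsion $\operatorname{Cl}(\mathcal O_K)[3]$, a finite group depending only on $k$. Passing to representatives of $\operatorname{Cl}(\mathcal O_K)[3]$ and of the finite quotient $\mathcal O_K^\times/(\mathcal O_K^\times)^3$ of units by cubes, I obtain
\[
  y_0+B\sqrt k \;=\; \epsilon\,(a+b\sqrt k)^3
\]
for some $(a,b)\in\ZZ^2$ (or in a finite union of sublattices of bounded index) and some $\epsilon$ in a finite list $U_k$ determined by $k$. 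Expanding and matching $\sqrt k$-coefficients gives
\[
  B \;=\; \epsilon'\cdot b(3a^2+kb^2)
\]
for a scalar $\epsilon'$ determined by $\epsilon$. The cube-free hypothesis on $B$ ensures this parametrization is essentially injective: any cube structure not captured by $(a+b\sqrt k)^3$ would have to hide in a cube factor of $B$, which is excluded, so the triple $(a,b,\epsilon)$ is determined from $(x_0,y_0,B)$ up to $O_k(1)$ ambiguity.

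The sum in the theorem is thus bounded, up to a constant in $k$, by
\[
  \#\bigl\{(a,b)\in\ZZ^2:\,b\ne 0,\,|b(3a^2+kb^2)|\le N\bigr\},
\]
a lattice-point count for the region $|T(a,b)|\le N$ where $T(a,b)=3a^2 b+k b^3$ is a binary cubic form of discriminant $-108k\ne 0$. By the classical lattice-point estimates for regions bounded by cubic forms (in the style of Davenport), this count is $\ll_k N^{2/3}$, well within the required $O_k(N)$ bound.

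The main obstacle is the reduction step: the bookkeeping for $\operatorname{Cl}(\mathcal O_K)[3]$, units modulo cubes, and the primes of $\mathcal O_K$ over $6k$ where coprimality fails, all while maintaining the cube-free condition on $B$, is the most intricate part of the argument. A secondary subtlety arises when $k=-3c^2$, for which $3a^2+kb^2=3(a-cb)(a+cb)$ factors into rational linear factors over $\QQ$, the form $T$ has three rational zero lines, and the parametrization degenerates along the lines $a=\pm cb$; this case must be handled by a separate but parallel argument, yielding the same bound.
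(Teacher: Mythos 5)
Your reduction breaks down at the coprimality step, and the failure is not a technicality: it occurs exactly at the points that dominate the count. A prime $\mathfrak{p}$ of $\mathcal{O}_K$ dividing both $y_0-B\sqrt{k}$ and $y_0+B\sqrt{k}$ divides $2y_0$ and $2B\sqrt{k}$, so away from the primes over $2k$ the common divisor is governed by $\gcd(y_0,B)$, which can be arbitrarily large even for $B$ cube-free. Concretely, for any integers $b,d$ the point $(d^2-kb^2,\ d(d^2-kb^2))$ lies on $E_B$ with $B=|b(d^2-kb^2)|$, and there $\gcd(x_0,B)=|d^2-kb^2|=x_0$; for instance with $k=2$ one has $(7,21)\in E_7(\ZZ)$ and $(21+7\sqrt{2})=(3+\sqrt{2})^2(3-\sqrt{2})$, which is nowhere near a cube ideal. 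Such points number $\gg_k N^{2/3}$ as $B$ ranges up to $N$ and are entirely missed by the parametrization $y_0+B\sqrt{k}=\epsilon(a+b\sqrt{k})^3$; the cube-free hypothesis on $B$ does nothing to prevent this. To repair the argument you would have to write $(y_0+B\sqrt{k})=\mathfrak{d}\,\mathfrak{a}^3$ with $\mathfrak{d}$ ranging over an unbounded family of divisors attached to $\gcd(x_0,B)$, and bounding that sum is the actual content of the problem: it is the analogue of the paper's parameter $g_0=\gcd(c,B)$ and of the representation $u^2-kg_1^2a^2=g_0h^3$ obtained after discriminant lowering, which is where the hypothesis that $k$ is not a square is finally used. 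Your argument never invokes that hypothesis, which is a further sign that the hard case has been lost.

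A secondary, lesser issue: for $k<0$ the form $T(a,b)=b(3a^2+kb^2)$ is reducible and its quadratic factor is indefinite, so the region $|T|\le N$ has cusps along $b=0$ and along the irrational lines $\sqrt{3}\,a=\pm\sqrt{-k}\,b$; Davenport-style area comparison does not apply there, and the asserted $O_k(N^{2/3})$ would need the standard count of lattice points where an indefinite binary quadratic form is small (Pell orbits in dyadic ranges). The cruder bound $O_k(N)$, obtained by noting that for each $b\neq 0$ there are $O(N/(\sqrt{|k|}\,b^2)+1)$ admissible values of $a$ and summing over $|b|\le N$, would suffice for the theorem, so this part is repairable --- but only once the coprimality gap above is closed.
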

It is known that the average number of integral points is bounded in the family of Mordell curves $y^2=x^3+k$. Indeed, from a special case of~\cite[Theorem~1(a)]{EvertseSilverman}, the number of integral points on $y^2=x^3+k$ is $\ll h_3(-k)$, where $h_3$ is the $3$-part of the class number of $\QQ(\sqrt{-k})$. Then the boundedness of the average follows from the work of Davenport and Heilbronn~\cite{DH2}.
Note that however this does not imply Theorem~\ref{theorem:ptbd}, since we are dealing with a much thinner family.

To prove Theorem~\ref{theorem:curvebd} and Theorem~\ref{theorem:ptbd}, we will make use of the correspondence by Mordell~\cite{Mordell} between integer solutions of $y^2=x^3+kB^2$ and certain integral binary cubic forms. The main strategy is to carry out a discriminant-lowering procedure on the set of the integral binary cubic forms.

Observe that the bound as stated in Theorem~\ref{theorem:curvebd} cannot hold when $k$ is square, since the $3$-torsion points $(0,\pm \sqrt{k}B)\in E_B(\QQ)$ would be integral points for all $B$. One might expect a similar bound to hold after excluding such torsion points, but our method, which relies on the fact that very few integers are represented by the quadratic form $x^2-ky^2$ to achieve a saving, does not seem to be adaptable to this particular case. 

We will give some heuristics in Section~\ref{section:heuristics}, which suggest that the orders of magnitude of the quantities in Theorem~\ref{theorem:curvebd} and Theorem~\ref{theorem:ptbd} should be $\asymp_k N^{\frac{2}{3}}$, though when $k$ is a square the points $(0,\pm \sqrt{k}B)$ should be removed.

Counting integer solutions to Mordell equation is related to counting the number of elliptic curves over $\QQ$ with bounded discriminants.  
Given the Weierstrass equation of an elliptic curve, the standard invariants $c_4$ and $c_6$ give integral solutions to the equation
\[c_4^3-c_6^2 = 1728\Delta,\]
where $\Delta$ is the discriminant of an elliptic curve~\cite[Chapter~III.1]{Silverman}.
 Brumer and McGuinness~\cite{BrumerMcGuinness} conjectured that the number of elliptic curves with 
 $ \lvert \Delta \rvert\leq N$ is $\sim cN^{\frac{5}{6}}$ for an explicit constant $c> 0$ (See also~\cite{Watkins}), but currently even an upper bound of $o(N)$ seems out of reach~\cite{Young}.  
 
An immediate consequence of Theorem~\ref{theorem:curvebd} is that, for any fixed squarefree integer $k\notin\{0,-3\}$, the
integers that are discriminants of elliptic curves are zero-density among all integers of the form $kB^2$. 
\begin{corollary}
Let $k\notin -3\cdot (\QQ^\times)^2$ be a non-zero integer.
The number of positive integers $B\leq N$ such that $kB^2$ is the discriminant of an elliptic curve over $\QQ$ is
\[\ll_k N\left(\frac{\log\log N}{\log N}\right)^{\frac{1}{2}}.\]
\end{corollary}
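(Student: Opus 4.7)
The plan is to deduce the corollary from Theorem~\ref{theorem:curvebd} by using the standard $c_4,c_6$ parameterization of Weierstrass equations to produce integral points on a rescaled cubic twist family.

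Suppose $B\leq N$ is such that $kB^2$ is the discriminant of some elliptic curve $E/\QQ$. Fixing a Weierstrass equation for $E$, the associated invariants $c_4,c_6\in\ZZ$ satisfy
\[c_4^3-c_6^2 = 1728\,\Delta = 1728\,kB^2,\]
so $(c_4,c_6)$ is an integral point on the curve
\[E'_B:\;y^2 = x^3 + k'B^2,\qquad k':=-1728k.\]
Thus every $B$ counted by the corollary is a $B$ for which $E'_B(\ZZ)\neq\varnothing$, and it suffices to apply Theorem~\ref{theorem:curvebd} to the family $\{E'_B\}$.

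To do so I need to check that $k'=-1728k$ is a nonzero integer that is not a rational square. Since $-1728 = -3\cdot 24^2$, we have $-1728k\in(\QQ^\times)^2$ if and only if $-3k\in(\QQ^\times)^2$, i.e.\ $k\in -3\cdot(\QQ^\times)^2$. The hypothesis excludes exactly this possibility (and $k\neq 0$), so $k'$ is admissible for Theorem~\ref{theorem:curvebd}. The stated bound then follows with an implicit constant depending on $k'$, hence on $k$.

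The main (and only) content is the verification that the exclusion $k\notin -3\cdot(\QQ^\times)^2$ in the corollary matches exactly the square-exclusion needed to invoke Theorem~\ref{theorem:curvebd} for the twist constant $-1728k$; there is no further obstacle, as the rest is a direct substitution.
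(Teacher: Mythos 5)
Your proof is correct and is exactly the argument the paper intends (the paper treats the corollary as an immediate consequence, having already noted the identity $c_4^3-c_6^2=1728\Delta$): the invariants $(c_4,c_6)$ of a Weierstrass model give an integral point on $y^2=x^3-1728kB^2$, and since $-1728=-3\cdot 24^2$, the twist constant $-1728k$ is a non-square precisely when $k\notin -3\cdot(\QQ^\times)^2$, so Theorem~\ref{theorem:curvebd} applies.
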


\section{Heuristics}\label{section:heuristics}
We can obtain heuristic estimates for $\sum_{1\leq B\leq N}\#E_B(\ZZ)$ similar to~\cite{BrumerMcGuinness} by replacing lattice point counts by area computations.
From the equation $y^2=x^3+kB^2$, we want to count $(x,y)\in\ZZ^2$ such that $\frac{1}{k}(y^2-x^3)$ is a square. 
Assuming that the density of squares near $\frac{1}{k}(y^2-x^3)$ is $(\frac{1}{k}(y^2-x^3))^{-\frac{1}{2}}$,
we can estimate the number of integral points heuristically by evaluating the integral
\[I_k\coloneqq\int\int_{0\leq \frac{1}{k}(y^2-x^3)\leq N^2}\frac{dxdy}{\sqrt{(y^2-x^3)/k}}.\]

If $k<0$, by rescaling the area, then integrating by parts
\begin{align*}
I_k&=\lvert k\rvert^{\frac{5}{6}}\int\int_{0\leq x^3-y^2\leq N^2}\frac{dxdy}{\sqrt{x^3-y^2}}\\
&=2\lvert k\rvert^{\frac{5}{6}}
\left(\int_{x>N^{\frac{2}{3}}}\int_{\sqrt{x^3-N^2}\leq y\leq x^{\frac{3}{2}}}\frac{dydx}{\sqrt{x^3-y^2}}
+\int_{x\leq N^{\frac{2}{3}}}\int_{0\leq y\leq x^{\frac{3}{2}}}\frac{dydx}{\sqrt{x^3-y^2}}\right)\\
&
=2\lvert k\rvert^{\frac{5}{6}}\left(\int_{x>N^{\frac{2}{3}}}\sin^{-1}\left(\frac{N}{x^{\frac{3}{2}}}\right)dx
+\frac{\pi}{2}\int_{x\leq N^{\frac{2}{3}}}dx\right)\\
&=3\lvert k\rvert^{\frac{5}{6}}N^{\frac{2}{3}}\int_{1}^{\infty}\frac{1}{\sqrt{u^3-1}}du
.
\end{align*}

If $k>0$,
proceeding similarly as before
\begin{align*}
I_k&=k^{\frac{5}{6}}\int\int_{0\leq y^2-x^3\leq N^2}\frac{dxdy}{\sqrt{y^2-x^3}}\\
&=2k^{\frac{5}{6}}
\left(\int_{x>0}\int_{x^{\frac{3}{2}}\leq y\leq \sqrt{x^3+N^2}}\frac{dydx}{\sqrt{y^2-x^3}}
+\int_{-N^{\frac{2}{3}}\leq x\leq 0}\int_{0\leq y\leq \sqrt{x^3+N^2}}\frac{dydx}{\sqrt{y^2-x^3}}\right)\\
&
=2k^{\frac{5}{6}}\int_{x\geq -N^{\frac{2}{3}}}\log \left(\frac{N+\sqrt{x^3+N^2}}{\lvert x\rvert^{\frac{3}{2}}}\right)dx\\
&=3k^{\frac{5}{6}}N^{\frac{2}{3}}\int_{-1}^{\infty}\frac{1}{\sqrt{u^3+1}}du
.\end{align*}

The size of $I_k$ seems to suggest that $\sum_{1\leq B\leq N}\#E_B(\ZZ)$ should be of size $\asymp \lvert k\rvert^{\frac{5}{6}}N^{\frac{2}{3}}$. However, this does not make sense when $k$ is a square because of the points $(0,\pm \sqrt{k}B)\in E_B(\ZZ)$. We expect that the correct estimate is instead $\sum_{1\leq B\leq N}\#E^*_B(\ZZ)\asymp \lvert k\rvert^{\frac{5}{6}}N^{\frac{2}{3}}$, where 
\[E^*_B(\ZZ)\coloneqq\begin{cases}
E_B(\ZZ)\setminus\{(0,\pm \sqrt{k}B)\}&\text{ if }k\text{ is a square},\\
E_B(\ZZ)&\text{ otherwise.}
\end{cases}\] 

We can get a lower bound that match with the expected order of $\sum_{1\leq B\leq N}\#E^*_B(\ZZ)$ in $N$ for fixed $k$, by direct counting. 
Take $0<b\leq\frac{1}{2}N^{\frac{1}{3}}k^{-\frac{1}{3}}$ and $0<\lvert d\rvert\leq\frac{1}{2}N^{\frac{1}{3}}k^{\frac{1}{6}}$.
Then $(d^2-kb^2,d(d^2-kb^2))\in E^*_B(\ZZ)$, where $B=\lvert b(d^2-kb^2)\rvert\leq N$.
Counting the pairs of $(d,b)$ gives 
\[\sum_{1\leq B\leq N}\#E^*_B(\ZZ)\geq \frac{1}{2}k^{-\frac{1}{6}}N^{\frac{2}{3}}.\]

A similar lower bound can be obtained for
$\#\{1\leq B\leq N:E^*_B(\ZZ)\neq \varnothing\}.$
Using~\cite[Theorem~1.1]{StewartXiao} (or~\cite[Theorem~1.1]{StewartXiaokfree} if we require $B$ to be cube-free), we can count the number of non-zero integers between $-N$ and $N$ that are represented by the binary cubic form $y(x^2-ky^2)$. Whenever $B=\lvert y(x^2-ky^2)\rvert$, there is an integral point $(x^2-ky^2,x(x^2-ky^2))\in E^*_B(\ZZ)$. This gives the estimate
\[\#\{1\leq B\leq N:E^*_B(\ZZ)\neq \varnothing\}\gg k^{-\frac{1}{6}}N^{\frac{2}{3}},\]
when $N$ is large enough relative to $k$.

\section{Binary cubic forms}
\subsection{Invariants and covariants}
We first recall some facts about the invariants and covariants of binary cubic forms. We refer the reader to~\cite[Section~3]{Cremona} for an overview of these quantities. Note that the formulas stated here are scaled differently, as we will only work with \emph{integer-matrix binary cubic forms}, namely forms of the shape
\[ax^3+3bx^2y+3cxy^2+dy^3, \text{ where }a,b,c,d\in\ZZ.\]

Let $f(x,y)=ax^3+3bx^2y+3cxy^2+dy^3$ be an integer-matrix binary cubic forms. 
The discriminant of $f$ is defined to be
\[\Delta=\Delta(f)\coloneqq 3b^2c^2-4 ac^3-4 b^3d-a^2d^2+6abcd.\]
The seminvariants of $g$ are $\Delta$, the leading coefficient $a$, 
\[H\coloneqq b^2-ac \qquad\text{ and }\qquad U\coloneqq 2b^3+a^2d-3abc.\]
The seminvariant $H$ is the leading coefficient of the Hessian covariant
\[H(x,y)\coloneqq (b^2-ac)x^2+(bc-ad)xy+(c^2-bd)y^2.\]
The seminvariants are related by the following syzygy
\begin{equation}\label{eq:syzygy}
\left(\frac{1}{2}U\right)^2=P^3-\frac{1}{4}\Delta a^2.
\end{equation}

Let $V$ be the space of integer-matrix binary cubic form.
Given any $f\in V$ and $(x_0,y_0)\in\ZZ^2$, define the action of $\gamma\in\GL_2(\ZZ)$ on the pair $(f,(x_0,y_0))$ by
\[\gamma\cdot (f(x,y),(x_0,y_0))=(f((x,y)\cdot \gamma),(x_0,y_0)\cdot\gamma^{-1} ).\]
Notice that this action preserves the value of $f(x_0,y_0)$.
\subsection{Relation to integral points}
We will make use of the correspondence between certain integral binary cubic forms and the integer solutions of $y^2=x^3+kB^2$ due to Mordell. See for example~\cite[Chapter~24]{Mordell} for a description of the correspondence, or~\cite{BGMordell} for a recent study of this correspondence in enumerating integral points.
More explicitly, given any $P=(c,d)\in E_B(\ZZ)$, we can attach to $P$ an binary integral cubic form 
\[f_P(x,y)=x^3-3cxy^2+2dy^3,\]
which has discriminant $\Delta(f_P)=-4kB^2$.

The correspondence can be formulated as follows. See~\cite[Section~2]{BGMordell} for the proof for general Mordell curves.
\begin{lemma}\label{lemma:Mcorr}
Fix integers $k\neq 0$ and $B\neq 0$. The following sets are in bijections:
\begin{enumerate}
\item $E_B(\ZZ)$;
\item binary cubic forms of the shape $f(x,y)=x^3-3cxy^2+2dy^3$, where $c,d\in \ZZ$ and $\Delta(f)=-4kB^2$; and
\item $\GL_2(\ZZ)$-equivalence classes of $(f,(x_0,y_0))\in V\times \ZZ^2$ satisfying $f(x_0,y_0)=1$ and $\Delta(f)=-4kB^2$.
\end{enumerate}
The forward map is given by
\[P\mapsto f_P\mapsto (f_P,(1,0)).\]
\end{lemma}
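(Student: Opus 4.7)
The plan is to handle the three bijections in sequence, with $(1)\leftrightarrow(2)$ essentially by algebra and $(2)\leftrightarrow(3)$ by a normalization procedure using the group action.

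For $(1)\leftrightarrow(2)$, I would directly compute $\Delta(f_P)$ for $f_P(x,y) = x^3 - 3cxy^2 + 2dy^3$ using the stated discriminant formula (with $a=1$, $b=0$, and the appropriate signs on the remaining coefficients), obtaining $\Delta(f_P) = -4(d^2 - c^3)$. Then $(c,d) \in E_B(\ZZ)$ precisely when $d^2 - c^3 = kB^2$, i.e.\ when $\Delta(f_P) = -4kB^2$. The inverse map reads $(c,d)$ directly off the coefficients of any form of the prescribed shape.

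For $(2)\to(3)$, the assignment $f \mapsto (f,(1,0))$ is well-defined since the leading coefficient of $f$ is $1$, so $f(1,0)=1$. To construct the inverse, given a representative $(f,(x_0,y_0))$ of a class in (3), the condition $f(x_0,y_0) = 1$ forces $\gcd(x_0,y_0) = 1$, so I can find $\gamma\in \SL_2(\ZZ)$ with $(x_0,y_0)\gamma^{-1} = (1,0)$; replacing $(f,(x_0,y_0))$ by $\gamma\cdot(f,(x_0,y_0))$ reduces to the case $(x_0,y_0)=(1,0)$, and the transformed form then has leading coefficient $1$, say $f = x^3 + 3bx^2y + 3cxy^2 + dy^3$. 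A further translation by $\gamma_b = \begin{pmatrix} 1 & 0 \\ -b & 1\end{pmatrix}$ fixes $(1,0)$ and kills the $x^2y$-coefficient, yielding $x^3 - 3(b^2-c)xy^2 + (2b^3 - 3bc + d)y^3$.

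The main obstacle is verifying that $2b^3 - 3bc + d$ is \emph{even}, so that the normalized form genuinely has the required shape $x^3 - 3c'xy^2 + 2d'y^3$. I would deduce this from the congruence $\Delta(f)\equiv 0\pmod 4$: with $a=1$ the discriminant formula reads $\Delta(f) = 3b^2c^2 - 4c^3 - 4b^3d - d^2 + 6bcd$, and reducing modulo $4$ forces $3b^2c^2 + 2bcd - d^2 \equiv 0\pmod 4$; a short case split on the parity of $d$ then shows $bc+d\equiv 0\pmod 2$, which is exactly the parity condition needed. The remaining checks — that the normalized form does not depend on the representative chosen, and that the two directions invert each other — amount to analyzing the stabilizer of $(1,0)$ among shape-preserving transformations, which is routine.
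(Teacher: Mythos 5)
The paper offers no proof of this lemma at all---it defers to \cite[Section~2]{BGMordell}---so there is no internal argument to compare against, and your direct approach is the natural one. The computational core of your outline is correct: the discriminant of $x^3-3cxy^2+2dy^3$ is indeed $-4(d^2-c^3)$, giving $(1)\leftrightarrow(2)$; the normalization of a pair in $(3)$ to the shape $x^3-3c'xy^2+(2b^3-3bc+d)y^3$ is right; and the parity of $2b^3-3bc+d$ is genuinely the one non-formal point there. Your deduction of $bc\equiv d\pmod 2$ from $\Delta\equiv 0\pmod 4$ is sound (e.g.\ via $3u^2+2uv-v^2=(3u-v)(u+v)$ with $u=bc$, $v=d$: both factors have the same parity, so if $u+v$ were odd the product would be odd).

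The gap is precisely in the step you dismiss as routine. If you actually compute the stabilizer, $(1,0)\cdot\gamma^{-1}=(1,0)$ forces $\gamma=\left(\begin{smallmatrix}1&0\\t&e\end{smallmatrix}\right)$ with $e=\pm1$; preserving the shape (vanishing $x^2y$-coefficient, which equals $3t$) forces $t=0$ but leaves $e$ free. The surviving element $\gamma=\left(\begin{smallmatrix}1&0\\0&-1\end{smallmatrix}\right)$ sends $(x^3-3cxy^2+2dy^3,(1,0))$ to $(x^3-3cxy^2-2dy^3,(1,0))$, i.e.\ it identifies the forms attached to $P=(c,d)$ and $-P=(c,-d)$. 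So the map from $(2)$ to $(3)$ is two-to-one away from $d=0$, and the asserted bijection fails as literally stated: for $k=2$, $B=1$ the two points $(-1,\pm1)$ yield $\GL_2(\ZZ)$-equivalent pairs. Your argument does prove the lemma verbatim with $\SL_2(\ZZ)$-equivalence in $(3)$ (where the stabilizer computation forces $t=0$ and $e=1$, hence $f_P=f_Q$), or equivalently with $E_B(\ZZ)$ replaced by $E_B(\ZZ)/\{\pm1\}$. Either corrected version suffices for everything the paper uses the lemma for, since it is only ever invoked to bound the number of integral points by a bounded multiple of the number of equivalence classes; but you cannot leave the final injectivity check unexamined, because it is exactly where the statement needs to be repaired.
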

\subsection{Reducible forms}
We wish to count the number of integral points that correspond to reducible forms under Lemma~\ref{lemma:Mcorr}. We will make use of some ideas in~\cite[Section~3.3]{BGMordell} to prove Lemma~\ref{lemma:reducibleform} and Lemma~\ref{lemma:reducible}.

\begin{lemma}\label{lemma:reducibleform}
Fix $\epsilon>0$ and an integer $k\neq 0$.
The number of $\GL_2(\ZZ)$-equivalence classes of integer-matrix binary cubic forms $f$ that satisfies the properties:
\begin{enumerate}
\item $f$ is reducible,
\item $f(x_0,y_0)=1$ for some $x_0,y_0\in\ZZ$; and
\item $\Delta(f)=-4kB^2$ for some positive integer $B\leq N$,
\end{enumerate}
is at most $O_k \left(N^{\frac{2}{3}}(\log N)^{1+\epsilon}\right)$.
\end{lemma}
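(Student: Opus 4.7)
The plan is to use the $\GL_2(\ZZ)$-action to put each reducible $f$ into a canonical form and then bound the parametrization via lattice points under a binary cubic form.

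First, I would exploit reducibility: by Gauss's lemma, $f = L \cdot Q$ with $L$ an integral linear form and $Q$ an integral quadratic form. The condition $f(x_0, y_0) = 1$ forces the content of $L$ to divide $1$, so $L$ is primitive. Since $\GL_2(\ZZ)$ acts transitively on primitive linear forms, I send $L$ to $y$, making $f = y Q$. The integer-matrix condition on $f$ then forces $Q(x, y) = 3bx^2 + 3cxy + dy^2$ for some $b, c, d \in \ZZ$. Next, $y_0 Q(x_0, y_0) = 1$ gives $y_0 = \pm 1$; a translation $(x, y) \mapsto (x + sy, y)$ followed by $y \mapsto -y$ if needed normalizes the marked point to $(0, 1)$, forcing $d = 1$. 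The discriminant condition then becomes
\[
b^2(4b - 3c^2) = 4kB^2, \qquad 1 \le B \le N.
\]

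Writing $g = \gcd(b, B)$, $b = g\beta_1$, $B = g B_1$ with $\gcd(\beta_1, B_1) = 1$, coprimality forces $\beta_1^2 \mid 4k$, so $\beta_1$ takes one of $O_k(1)$ integer values. For each such $\beta_1$ with $m = 4k/\beta_1^2 \in \ZZ$, the equation rearranges to $B = B_1(3c^2 + mB_1^2)/(4\beta_1)$. It therefore suffices to count integer pairs $(B_1, c)$ for which the binary cubic form
\[
G(B_1, c) = B_1(3c^2 + mB_1^2)
\]
has absolute value at most $4|\beta_1| N$. When $-3k$ is not a rational square, the quadratic factor $3c^2 + mB_1^2$ is irreducible over $\QQ$, and a standard area estimate for a non-degenerate binary cubic form (splitting at $|B_1| \sim (N/|m|)^{1/3}$) gives $O_k(N^{2/3})$ for the lattice-point count.

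The hard part will be the degenerate case $-3k \in (\QQ^\times)^2$, where $G$ factors into three integral linear forms. Here one must carefully analyze the lattice points near each of the three zero loci and enforce the integrality and coprimality conditions on $g$ via Möbius inversion; this is where the $(\log N)^{1+\epsilon}$ loss enters, from divisor-type sums counting how many $(g, B_1, c)$ realize a given value of $G$. Summing the contributions over the $O_k(1)$ values of $\beta_1$ then yields the claimed bound $O_k(N^{2/3}(\log N)^{1+\epsilon})$.
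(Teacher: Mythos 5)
Your reduction is essentially the paper's: after sending the linear factor to a coordinate axis and normalizing the marked point, you arrive at the normal form $y(3bx^2+3cxy+y^2)$ (the paper uses $x(x^2+3bxy+3cy^2)$, the same thing with $x$ and $y$ swapped), and the problem becomes counting lattice points $(B_1,c)$ with $\lvert B_1(3c^2+mB_1^2)\rvert\ll_k N$. Your $\gcd(b,B)$ device giving $\beta_1^2\mid 4k$ is a clean alternative to the paper's ``assume $k$ squarefree, so $c\mid 2B$,'' and that part is fine.

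The gap is in how you dispose of the case where $3c^2+mB_1^2$ is irreducible. When $k>0$ the quadratic factor is definite and the area estimate does give $O_k(N^{2/3})$. But when $k<0$ and $-3k$ is not a square, the quadratic factor is an \emph{indefinite} irreducible form, and the region $\lvert B_1(3c^2+mB_1^2)\rvert\le N$ has cusps along the two irrational lines $\sqrt{3}\,c=\pm\sqrt{-m}\,B_1$. The lattice-point count in those cusps is not controlled by the area: for each small nonzero value $v=3c^2+mB_1^2$, Pell/unit theory produces about $\rho(v)\log(N/\lvert v\rvert)$ solutions with $\lvert B_1\rvert\le N/\lvert v\rvert$, where $\rho(v)$ is a divisor-type quantity, and summing over $\lvert v\rvert\ll N^{2/3}$ already gives $\gg N^{2/3}\log N$ points. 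So the asserted $O_k(N^{2/3})$ is false in this case, and obtaining even $N^{2/3}(\log N)^{1+\epsilon}$ requires the argument you have omitted: factor $v$ into conjugate ideals of $\QQ(\sqrt{-3k})$ (giving the divisor bound on the number of essentially distinct representations) and count the $\ll\log N$ units that keep a generator inside the box. This is precisely the paper's third case and is where its $(\log N)^{1+\epsilon}$ comes from. You have instead located all of the difficulty in the case $-3k\in(\QQ^\times)^2$, which is in fact the more elementary one: there the cubic splits into three rational linear forms, each of which is a nonzero integer at every counted point, so all coordinates are bounded by $O(N)$ and a divisor-sum argument suffices with no unit-group input. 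To repair the proof you need to add the ideal-factorization/unit-counting step for the indefinite irreducible case (and, as the paper does, first split off the regimes where $\lvert B_1\rvert$ is small, so that the value $v$ being summed over is restricted to $\lvert v\rvert\ll N^{2/3}(\log N)^{-1/2+\epsilon}$ and the divisor sum lands at $N^{2/3}(\log N)^{1+\epsilon}$).
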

\begin{proof}
Without loss of generality, assume $k$ is squarefree.
If $f$ satisfies the listed properties, then under $\GL_2(\ZZ)$-equivalence, we can assume
\begin{equation}\label{eq:reducibleg}
f(x,y)=x(x^2+3bxy+3cy^2),
\end{equation}
where $b,c\in\ZZ$. 
The discriminant of $f$ is $\Delta(f)=c^2(3b^2-4c)=-4kB^2$, which we can rewrite as 
\begin{equation}\label{eq:cb}
12c=(3b)^2+3k\left(\frac{2B}{c}\right)^2.
\end{equation}
Since $k$ is assumed to be squarefree, $c\mid 2B$. Also $c\neq 0$ since $\Delta(f)\neq 0$.

First consider when $\lvert 2B/c \rvert \leq  N^{\frac{1}{3}}(\log N)^{\frac{1}{2}-\epsilon}$ and $\lvert b\rvert >  N^{\frac{1}{3}}(\log N)^{\frac{1}{2}+2\epsilon}$.
From~\eqref{eq:cb}, we see that $c\gg_k b^2$, and so
$2B/c\ll_k N/b^2$ and $b\ll_k N^{\frac{1}{2}}$.
Since $c$ is determined by $b$ and $2B/c$ using~\eqref{eq:cb}, it suffices to count the possible pairs of $(b,2B/c)$.
Then the number of pairs $(b,2B/c)$ in this range is bounded by
\[\ll_k\sum_{N^{\frac{1}{3}}(\log N)^{\frac{1}{2}+2\epsilon}<b\ll_k N^{\frac{1}{2}}}\frac{N}{b^2}=N^{\frac{2}{3}}(\log N)^{-\frac{1}{2}-2\epsilon}.\]

If $\lvert 2B/c\rvert \leq  N^{\frac{1}{3}}(\log N)^{\frac{1}{2}-\epsilon}$ and $\lvert b\rvert \leq N^{\frac{1}{3}}(\log N)^{\frac{1}{2}+2\epsilon}$, then the number of pairs $(b,2B/c)$ in this range is $\ll N^{\frac{2}{3}}(\log N)^{1+\epsilon}$ as required.

The remaining case is when $\lvert 2B/c\rvert >N^{\frac{1}{3}}(\log N)^{\frac{1}{2}-\epsilon}$, so \[\lvert c\rvert <2N^{\frac{2}{3}}(\log N)^{-\frac{1}{2}+\epsilon}.\]
This is not possible if $k$ is positive because $12c<(2B/c)^2$, so assume that $k$ is negative.
From~\eqref{eq:cb}, we have the factorisation
\[\left(3b-\sqrt{-3k}\cdot\frac{2B}{c}\right)\left(3b+\sqrt{-3k}\cdot\frac{2B}{c}\right)=12c.\]
If $k\neq -3$, for each $c$, the number of factorisations of $12c$ as two conjugate ideals in $\QQ(\sqrt{-k})$ is $\ll \prod_{p\mid c,\ (\frac{-3k}{p})=1}(v_p(c)+1)$. Since 
\[3b+\sqrt{-3k}\cdot \frac{2B}{c}\ll_k N,\]
the possible generator given each principle ideal in a factorisation is $\ll_k\log N$ if $k\neq -3$. When $k=-3$, there are $\ll \prod_{p\mid c}(v_p(c)+1)$ factorisations and exactly one pair of $(2B/c, b)$ from each factorisation.
Therefore fixing any square-free $k$, the number of $(2B/c, b)$ in this case is bounded by 
\begin{align*}
&\ll_k
\begin{cases}
\displaystyle\sum_{\lvert c\rvert <2N^{\frac{2}{3}}(\log N)^{-\frac{1}{2}+\epsilon}} \prod_{ p}2^{v_p(c)}  &\text{if }k= 1,\vspace{1em}
\\
\displaystyle\log N\cdot \sum_{\lvert c\rvert <2N^{\frac{2}{3}}(\log N)^{-\frac{1}{2}+\epsilon}} \prod_{ (\frac{-3k}{p})=1}2^{v_p(c)} &\text{if }k\neq 1
\end{cases}\\
&\ll
N^{\frac{2}{3}}(\log N)^{1+\epsilon}.\end{align*}
\end{proof}
\begin{lemma}\label{lemma:reducible}
Let $\epsilon>0$,
We have
\[\sum_{0\leq B\leq N}\#\left\{P\in E_B(\ZZ):f_P\text{ is reducible}\right\}\ll_k N^{\frac{2}{3}}(\log N)^{1+\epsilon}.\]
\end{lemma}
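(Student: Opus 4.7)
The plan is to translate the count of integral points into a count of $\GL_2(\ZZ)$-equivalence classes of pairs $(f,v)$ via Lemma~\ref{lemma:Mcorr}, and then to show that each class of reducible $f$ contributes only $O(1)$ classes of pairs, so that Lemma~\ref{lemma:reducibleform} applies directly. Under the bijection of Lemma~\ref{lemma:Mcorr}(3), the sum on the left equals the number of $\GL_2(\ZZ)$-equivalence classes of pairs $(f,v)\in V\times\ZZ^2$ with $f$ reducible, $f(v)=1$, and $\Delta(f)=-4kB^2$ for some integer $1\leq B\leq N$ (the $B=0$ term is ignored as $E_0$ is singular).

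The core step is showing that each equivalence class of reducible $f$ contributes only $O(1)$ classes of pairs. I would argue this by exhibiting a canonical representative within each class of pairs. Since $f(v)=1$ forces $\gcd(v)=1$, we may apply $\SL_2(\ZZ)$ to move $v$ to $(1,0)$; after this, the leading coefficient $a=f(1,0)$ equals $1$, so $f(x,y)=x^3+3bx^2y+3cxy^2+dy^3$ with $a=1$. Reducibility combined with leading coefficient $1$ and the rational root theorem then produces an integer root $\alpha$ of $f(x,1)$, i.e.\ a factor $x-\alpha y$ of $f$; the unipotent matrix $\gamma=\bigl(\begin{smallmatrix}1&0\\ \alpha&1\end{smallmatrix}\bigr)$ simultaneously sends this factor to $x$ and fixes $v=(1,0)$, so we arrive at the canonical form $f=x(x^2+3bxy+3cy^2)$ with $v=(1,0)$, matching the form used in Lemma~\ref{lemma:reducibleform}.

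For a fixed canonical $f=x(x^2+3bxy+3cy^2)$, the number of classes of pairs $(f,v')$ sharing this form is bounded by the number of integer solutions of $f(v')=1$. Writing $v'=(x_0,y_0)$, the factorisation $x_0(x_0^2+3bx_0y_0+3cy_0^2)=1$ forces $x_0\in\{\pm 1\}$, and each sign reduces to a polynomial equation in $y_0$ of degree at most $2$, producing $O(1)$ solutions. Combining this with Lemma~\ref{lemma:reducibleform} yields the stated bound $\ll_k N^{\frac{2}{3}}(\log N)^{1+\epsilon}$.

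The main technical point is bookkeeping rather than counting: one must ensure that the sequence of $\GL_2(\ZZ)$-transformations used to normalise $f$ also transports $v$ consistently, so that we end up enumerating equivalence classes of pairs rather than just of forms. Once this is confirmed, the $O(1)$ bound on representations of $1$ by a reducible integer-matrix cubic of leading coefficient $1$ is elementary.
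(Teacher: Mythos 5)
Your proposal is correct and follows essentially the same route as the paper: normalise the reducible form to $x(x^2+3bxy+3cy^2)$, observe that the marked vector $(x_0,y_0)$ must satisfy $x_0=\pm1$ with $y_0$ then constrained to $O(1)$ values, so each $\GL_2(\ZZ)$-class of reducible forms yields at most four integral points, and conclude by Lemma~\ref{lemma:reducibleform}. The only cosmetic difference is that you normalise the pair $(f,v)$ directly (moving $v$ to $(1,0)$ first, then the linear factor to $x$), whereas the paper normalises the form first and tracks the image of $(1,0)$; the two bookkeepings are equivalent.
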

\begin{proof}
If $f_P$ is a reducible form, then $f_P$ is $\GL_2(\ZZ)$-equivalent to a form
\[g(x,y)=x(x^2+3bxy+3cy^2),
\]
where $b,c\in\ZZ$. Write $(g(x,y),(x_0,y_0))=\gamma\cdot (f_P(x,y),(1,0))$.
Then since $f_P(1,0)=1$, we have
$g(x_0,y_0)=1$.
From the factorisation of $x_0(x_0^2+3bx_0y_0+3cy_0^2)=1$, we see that $x_0=\pm 1$. Putting back $x_0=\pm 1$ to the equation allows us to solve for $y_0$ in terms of $b$ and $c$.
Therefore $(f_P,(1,0))$ is $\GL_2(\ZZ)$-equivalent to one of $(g,(1,0))$, $(g,(1,-b/c))$, and $(g,(-1,(3b\pm\sqrt{3(3b^2-4c)})/6c))$. Hence each $\GL_2(\ZZ)$-class of reducible forms can only correspond to at most $4$ integral points under Lemma~\ref{lemma:Mcorr}.

Applying Lemma~\ref{lemma:reducibleform} to bound the number of possible $g$ completes the claim.
\end{proof}

\section{Lowering the discriminant}
We start by showing that we can transform $f_P$ into integral cubic forms with smaller discriminants. A similar discriminant-lowering lemma for binary quartic forms can be found in~\cite{ChanCongruent}.
\begin{lemma}\label{lemma:reddisc}
Let $P=(c,d)\in E_B(\ZZ)$. 
Take a positive integer $M$ dividing $B$ that is coprime to $c$.
Then there exists some integer $w\neq 0$ such that 
\begin{equation}\label{eq:discred}
F_P(x,y)\coloneqq \frac{1}{M^2}\cdot
f_P\left((x,y)\cdot \begin{pmatrix}
M & 0\\
w & 1
\end{pmatrix}\right)
\end{equation}
is an integer-matrix binary cubic form. Moreover, $F_P(1,0)=M$ and $\Delta(F)=-\frac{4B^2}{M^2}$.
\end{lemma}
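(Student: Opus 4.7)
The plan is to apply the substitution explicitly and read off the integrality conditions it forces on $w$. Expanding $f_P((Mx+wy,\,y))/M^2$ yields
\[F_P(x,y) = Mx^3 + 3wx^2y + \frac{3(w^2-c)}{M}xy^2 + \frac{w^3 - 3cw + 2d}{M^2}y^3,\]
so $F_P$ is integer-matrix precisely when $M \mid w^2 - c$ and $M^2 \mid w^3 - 3cw + 2d$.

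The key step is to use the Mordell equation $d^2 = c^3 + kB^2$: since $M \mid B$, we have $d^2 \equiv c^3 \pmod{M^2}$, and since $\gcd(c, M) = 1$, the coefficient $c$ is invertible modulo $M^2$. This lets me take $w$ to be any non-zero integer lift of $d c^{-1} \pmod{M^2}$ (if $d \equiv 0 \pmod{M^2}$, I take $w = M^2$ to keep $w \neq 0$). With this choice, $w^2 \equiv d^2 c^{-2} \equiv c \pmod{M^2}$, which handles the first divisibility. For the second, I compute $w^3 \equiv w \cdot c \equiv d \pmod{M^2}$, so
\[w^3 - 3cw + 2d \equiv d - 3d + 2d \equiv 0 \pmod{M^2}.\]

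The remaining claims are routine. The value $F_P(1,0) = M$ is immediate from the displayed form. For the discriminant, I combine the standard identities $\Delta(f\circ\gamma) = (\det\gamma)^6\,\Delta(f)$ under a linear substitution and $\Delta(\lambda f) = \lambda^4\,\Delta(f)$ under scalar scaling; with $\det\gamma = M$ and $\lambda = 1/M^2$ these yield
\[\Delta(F_P) = M^{-8}\cdot M^6\cdot \Delta(f_P) = M^{-2}\cdot(-4kB^2) = -\frac{4kB^2}{M^2}.\]
The only point requiring a genuine idea is the construction of $w$: the Mordell equation, reduced modulo $M^2$, says precisely that $c$ is a quadratic residue modulo $M^2$ with distinguished square root $dc^{-1}$, and this same relation automatically takes care of the cubic congruence. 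Everything else is computation.
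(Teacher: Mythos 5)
Your proof is correct and follows essentially the same route as the paper: the same choice $w\equiv c^{-1}d\pmod{M^2}$, the same use of $d^2\equiv c^3\pmod{M^2}$ to verify the two divisibility conditions, with the added (harmless) care about ensuring $w\neq 0$ and an explicit covariance computation for the discriminant. Note that your value $\Delta(F_P)=-4kB^2/M^2$ is the correct one --- the factor $k$ is missing from the lemma statement by an apparent typo, as the paper itself later uses $\Delta(F_P)=-4kg^2$ with $M=B/g$.
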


\begin{proof}
Take $w\coloneqq c^{-1}d\mod M^2$.
We proceed to check that the coefficients of 
\[\begin{split}
f_P\left((x,y)\cdot \begin{pmatrix}
M & 0\\
w & 1
\end{pmatrix}\right)
&=f_P(Mx+wy,y)\\
&=M^3x^3+3M^2wx^2y+3M(w^2-c)xy^2+(w^3-3cw+2d)y^3
\end{split}\]
are all divisible by $M^2$.
The $x^3$- and $x^2y$-coefficients are clearly divisible by $M^2$.
Now put $w\coloneqq c^{-1}d\mod M^2$ into the $xy^2$- and $y^3$-coefficients and use $d^2=c^3+kB^2$, we see that \[w^2-c\equiv c^{-2}d^2-c=c^{-2}kB^2\bmod M^2\] and
\[w^3-3cw+2d\equiv c^{-3}d^3-3d+2d\equiv c^{-3}dkB^2\bmod M^2\]
are both divisible by $M^2$.
Therefore $F_P$ has integer coefficients. The remaining claims are immediate from the formula of $F_P$.
\end{proof}

Given an integral point $(c,d)\in E_B(\ZZ)$, take 
\[g\coloneqq\prod_{p\mid \gcd(c,B)} p^{v_p(B)},\] and apply Lemma~\ref{lemma:reddisc} with $M=B/g$. Take $F_P$ as in~\eqref{eq:discred}. The map
\[E_B(\ZZ)\rightarrow (V\times \ZZ^2)/\GL_2(\ZZ) \qquad
P\mapsto (F_P,(1,0))\]
is injective by the following lemma.
\begin{lemma}
Fix a positive integer $B$ and take $M\mid B$.
Suppose $P,Q\in E_B(\ZZ)$, and such that $\gcd(x(P),M)=\gcd(x(Q),M)=1$. Construct cubic forms $F_P$ and $F_Q$ with respect to $M$ via~\eqref{eq:discred}. 
If $(F_P,(1,0))$ and $(F_Q,(1,0))$ are $\GL_2(\ZZ)$-equivalent, then $P=Q$.
\end{lemma}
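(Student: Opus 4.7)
Set $A_P = \begin{pmatrix} M & 0 \\ w_P & 1 \end{pmatrix}$ and $A_Q = \begin{pmatrix} M & 0 \\ w_Q & 1 \end{pmatrix}$, with $w_P, w_Q$ the canonical representatives of $c^{-1}d \bmod M^2$ chosen in the proof of Lemma~\ref{lemma:reddisc} for $P = (c_P, d_P)$ and $Q = (c_Q, d_Q)$ respectively, so that $M^2 F_P(x,y) = f_P((x,y)A_P)$ and similarly for $Q$. The plan is to lift any hypothetical $\GL_2(\ZZ)$-equivalence of the pairs $(F_P,(1,0))$ and $(F_Q,(1,0))$ back to an equivalence of the rigidly-shaped cubics $f_P$ and $f_Q$, and then exploit the fact that both are monic in $x$ with no $x^2y$-term.

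Any $\gamma \in \GL_2(\ZZ)$ realising the equivalence must fix $(1,0)$ under the action, which pins its first row and so forces $\gamma = \begin{pmatrix} 1 & 0 \\ s & t \end{pmatrix}$ with $s \in \ZZ$ and $t = \pm 1$. The relation $F_Q(x,y) = F_P((x,y)\gamma)$ then lifts, by clearing $M^{-2}$ and changing variables by $A_Q^{-1}$, to $f_Q(x,y) = f_P((x,y)\beta)$ with $\beta = A_Q^{-1}\gamma A_P$. A quick matrix multiplication gives
\[
\beta = \begin{pmatrix} 1 & 0 \\ sM + tw_P - w_Q & t \end{pmatrix},
\]
which is automatically an integer matrix since $f_P, f_Q$ both lie in $V$.

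The heart of the argument is the coefficient comparison. Both $f_P$ and $f_Q$ have the very restrictive shape $X^3 - 3cXY^2 + 2dY^3$, in particular with vanishing $X^2Y$-coefficient. Writing $r = sM + tw_P - w_Q$ and expanding, the $x^2y$-coefficient of $f_P(x+ry, ty)$ is $3r$, which must therefore be zero; the $xy^2$- and $y^3$-coefficients then yield $c_P = c_Q$ and $d_Q = td_P$. When $t = 1$ this is immediately $P = Q$. The case $t = -1$ (which would correspond to $Q = -P$) is the main obstacle: here $r = 0$ combined with the congruences $w_P \equiv c_P^{-1}d_P$ and $w_Q \equiv -c_P^{-1}d_P \pmod{M^2}$ yields $sM \equiv 0 \pmod{M^2}$, so $M \mid s$, and one closes by using the canonical choice $w_P, w_Q \in [0, M^2)$ together with the precise integer shape of the resulting $\gamma$ to rule out this sign and force $t = 1$, after which $P = Q$ follows at once.
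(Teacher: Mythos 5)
Your setup and computation track the paper's own argument closely: both proofs pin down $\gamma$ from the condition $(1,0)\cdot\gamma^{-1}=(1,0)$, conjugate by the matrices $\begin{pmatrix} M & 0\\ w & 1\end{pmatrix}$ to transfer the equivalence from $F_P,F_Q$ back to $f_P,f_Q$, and then use the vanishing of the $x^2y$-coefficient to force the off-diagonal entry to be zero. You are in fact more careful than the paper at one point: the stabilizer of $(1,0)$ really is $\left\{\begin{pmatrix}1&0\\ s&t\end{pmatrix}: s\in\ZZ,\ t=\pm1\right\}$, whereas the paper's proof silently takes $t=1$.

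The problem is your final step. The case $t=-1$ cannot be ruled out, by any choice of representatives $w_P,w_Q$: it genuinely occurs whenever $Q=-P$. Concretely, if $P=(c,d)$ and $Q=(c,-d)$ both lie in $E_B(\ZZ)$, then $w_P+w_Q\equiv c^{-1}d-c^{-1}d\equiv 0\bmod M^2$, so $s\coloneqq (w_P+w_Q)/M$ is an integer (with the canonical choice in $[0,M^2)$ it equals $0$ or $M$), and $\gamma=\begin{pmatrix}1&0\\ s&-1\end{pmatrix}\in\GL_2(\ZZ)$ satisfies $\beta=A_Q^{-1}\gamma A_P=\begin{pmatrix}1&0\\ 0&-1\end{pmatrix}$, whence $f_P((x,y)\beta)=f_P(x,-y)=f_Q(x,y)$ and $(F_P,(1,0))$, $(F_Q,(1,0))$ are equivalent even though $P\neq Q$ (unless $d=0$). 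So the sign cannot be "ruled out by the canonical choice of $w$" as you assert; your own coefficient comparison correctly shows that the full conclusion of the argument is $Q=\pm P$, not $Q=P$. This is a defect shared with (indeed, hidden in) the paper's proof; the honest statement is that $P\mapsto (F_P,(1,0))$ is at most $2$-to-$1$, which is all the subsequent counting needs since every bound is up to an implied constant. As written, though, your last sentence claims something false, and a correct write-up should either weaken the conclusion to $P=\pm Q$ or explain why the application tolerates the factor of $2$.
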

\begin{proof}
Take $w_P$ and $w_Q$ in place of $w$ in~\eqref{eq:discred} for $F_P$ and $F_Q$ respectively.
Suppose $\gamma\cdot(F_P,(1,0))=(F_Q,(1,0))$ for some $\gamma \in\GL_2(\ZZ)$. Then $(1,0)\cdot \gamma^{-1}=(1,0)$ implies that we can write $\gamma=\begin{pmatrix}
1 & 0\\
u & 1\end{pmatrix}$ for some $u\in\ZZ$.
From $F_P((x,y)\cdot \gamma)=F_Q(x,y)$ and
\[\begin{pmatrix}
1 & 0\\
u & 1\end{pmatrix}\cdot\begin{pmatrix}
M & 0\\
w_Q & 1\end{pmatrix}^{-1}\begin{pmatrix}
M & 0\\
w_P & 1\end{pmatrix}
=\begin{pmatrix}
1 & 0\\
uM+w_P-w_Q & 1\end{pmatrix},
\]
we have
\[f_P\left((x,y)\cdot \begin{pmatrix}
1 & 0\\
uM+w_P-w_Q & 1\end{pmatrix}\right)=
f_Q(x,y).
\]
The $x^2y$-coefficient of $f_P$ and $f_Q$ are both $0$, so we must have $f_P=f_Q$, and hence $P=Q$.
\end{proof}

\section{Counting integral points}
We now proceed to count the number of $\GL_2(\ZZ)$-equivalence classes of $(F_P,(1,0))$. 
We first transform the forms $F_P$ to reduced forms with bounded seminvariants.

\begin{proposition}[{\cite[Proposition~2, Proposition 4]{Cremona}}]\label{prop:reducedbd}
Every integer-matrix binary cubic form with discriminant $\Delta\neq 0$ is $\GL_2(\ZZ)$-equivalent to a reduced form with seminvariants in the following ranges
\[\lvert a\rvert\leq 2^{\frac{3}{2}}3^{-\frac{3}{4}}\lvert\Delta\rvert^{\frac{1}{4}},\qquad
\lvert H\rvert\leq 2^{\frac{1}{3}} 3^{-\frac{1}{2}}\lvert\Delta\rvert^{\frac{1}{2}}.\]
\end{proposition}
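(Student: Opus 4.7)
The plan is to exhibit a $\GL_2(\ZZ)$-reduction procedure for $f$ driven by its Hessian covariant $H_f$, and then to extract the two bounds from the syzygy $(U/2)^{2} = H^{3} - \tfrac{1}{4}\Delta a^{2}$.

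Since $H_f(x,y)$ is a $\GL_2(\ZZ)$-covariant of $f$ whose discriminant equals $-\Delta$ (a direct calculation from the given formula), any $\gamma \in \GL_2(\ZZ)$ that reduces $H_f$ as a binary quadratic form simultaneously reduces $f$. When $\Delta>0$, $H_f$ is definite, and Gauss reduction of $H_f$ yields an equivalent $f$ whose leading Hessian coefficient satisfies $|b^{2}-ac| \leq \sqrt{\Delta/3}$. When $\Delta<0$, $H_f$ is indefinite; one reduces instead via the geometric action of $\GL_2(\ZZ)$ on the point of the upper half plane $\mathcal{H}$ attached to the non-real root of $f$, translating by $x\mapsto x+ty$ and inverting by $(x,y)\mapsto(-y,x)$ until that point lies in the standard fundamental domain. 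The second case produces a slightly weaker constant, and the uniform bound $|H|\leq 2^{1/3}3^{-1/2}|\Delta|^{1/2}$ stated in the proposition is the worse of the two.

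Once $|H|$ is bounded, the syzygy controls $|a|$. In the positive discriminant case, non-negativity of $(U/2)^{2}$ forces $\tfrac{1}{4}\Delta a^{2} \leq H^{3}$, so
\[
|a| \leq 2|H|^{3/2}\Delta^{-1/2} \leq 2\bigl(2^{1/3}3^{-1/2}\bigr)^{3/2}\Delta^{1/4} = 2^{3/2}3^{-3/4}\Delta^{1/4},
\]
matching the stated bound exactly. In the negative discriminant case, the syzygy rearranges to $(U/2)^{2} = H^{3}+\tfrac{1}{4}|\Delta|a^{2}$; one extracts the bound on $|a|$ by computing $|a|^{4}$ directly from the root factorisation $f(x,y)=a(x-\alpha y)|x-\beta y|^{2}$, using that reduction confines both $\alpha\in\mathbb{R}$ and $\beta\in\mathcal{H}$ to a compact region and that $27|\Delta|=a^{4}\prod_{i<j}(\alpha_i-\alpha_j)^{2}$.

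The main obstacle is the negative discriminant case. Unlike Gauss reduction in the definite case, there is no unique shortest form for an indefinite binary quadratic, so one cannot simply reduce $H_f$ in one step; the geometric framework via $\mathcal{H}$ is essential, and the explicit constants $2^{1/3}3^{-1/2}$ and $2^{3/2}3^{-3/4}$ arise precisely from the vertices and boundary of the standard fundamental domain of $\SL_2(\ZZ) \backslash \mathcal{H}$.
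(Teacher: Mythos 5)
The paper does not actually prove this proposition---it is imported verbatim from Cremona's Propositions 2 and 4 (with the remark afterwards patching the $H=0$ case)---so the only question is whether your sketch would establish the stated bounds. Your positive-discriminant half is correct and is exactly Cremona's argument: the Hessian has discriminant $-\Delta$, hence is definite when $\Delta>0$, Gauss reduction gives $\lvert H\rvert\le 3^{-1/2}\Delta^{1/2}$, and the syzygy then bounds $\lvert a\rvert$.

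The negative-discriminant half has a genuine gap. Placing the complex root $\beta$ in the standard fundamental domain does \emph{not} confine the real root $\alpha$ to a compact region: the translations $x\mapsto x+ty$ are entirely spent normalising $\beta$, and $\alpha$ remains unconstrained. Worse, this reduction criterion provably cannot yield the stated constants. Writing $27\lvert\Delta\rvert=4a^4\lvert\alpha-\beta\rvert^4(\operatorname{Im}\beta)^2$ and $H=\tfrac{a^2}{9}\bigl((\alpha-\operatorname{Re}\beta)^2-3(\operatorname{Im}\beta)^2\bigr)$, one checks that as $\beta$ approaches the corner $e^{i\pi/3}$ and $\alpha$ approaches $\operatorname{Re}\beta$, the ratios $\lvert a\rvert/\lvert\Delta\rvert^{1/4}$ and $\lvert H\rvert/\lvert\Delta\rvert^{1/2}$ tend to $2$ and $1$, exceeding the claimed $2^{3/2}3^{-3/4}\approx 1.24$ and $2^{1/3}3^{-1/2}\approx 0.73$. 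A concrete integer-matrix witness is $f=2x^3-3x^2y+3xy^2-y^3$, with roots $\tfrac12$ and $e^{\pm i\pi/3}$: here $\Delta=-1$, $a=2$, $H=-1$, so both bounds fail even though $\beta$ lies in (the closure of) the fundamental domain; yet $f(x,x+y)=x^3-y^3$, which does satisfy them. Cremona's reduction for $\Delta<0$ is not ``reduce by the complex root'': he Gauss-reduces a \emph{positive definite covariant quadratic} (the Julia/Mathews quadratic, a weighted combination of $(x-\alpha y)^2$ and $(x-\beta y)(x-\bar\beta y)$ with weights determined by the root configuration), whose reduction pins down the real root relative to the complex pair; the constants $2^{3/2}3^{-3/4}$ and $2^{1/3}3^{-1/2}$ arise from that covariant. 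As written, your argument only gives the weaker bounds $\lvert a\rvert\le 2\lvert\Delta\rvert^{1/4}$ and $\lvert H\rvert\le\lvert\Delta\rvert^{1/2}$ when $\Delta<0$ (which, to be fair, would still suffice for every application of the proposition in this paper, but does not prove the proposition as stated).
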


\begin{remark}
The possibility that $H=0$ should be included in~\cite[Proposition~4]{Cremona}. A particular example of a reduced cubic form with $H=0$ is $ax^3+by^3$ when $1\leq a\leq b$.
\end{remark}

For each $P=(c,d)\in E_B(\ZZ)$, we compute using~\eqref{eq:discred}, that the Hessian covariant of $F_P$ is
\[H(x,y)=cx^2+\frac{2(cw-d)}{M}xy+\frac{c^2+cw^2-2dw}{M^2}y^2
,\]
 which is divisible by $g_0\coloneqq\gcd(c,B)$. Therefore the Hessian of the reduced form of $F_P$ is also divisible by $g_0$, in particular the seminavariant $H$ of the reduced form will be divisible by $g_0$. 

Recall that now $\Delta(F_P)=-4kg^2$, where $g=\prod_{p\mid \gcd(c,B)} p^{v_p(B)}$.
Factor $g=g_0g_1$, where 
\[g_0\coloneqq\gcd(c,B),\quad \text{ and }\quad g_1\coloneqq \prod_{p\mid \gcd(c,B)}p^{\max\{v_p(B)-v_p(c),0\}}.\]
Notice that $g_1$ is only non-trivial if there exists some prime $p$ such that $v_p(B)> v_p(c)\geq 1$.
\begin{lemma}\label{lemma:cubefree}
Suppose $d,c,k,B$ are integers such that $d^2=c^3+kB^2$.
If $p$ is an odd prime satisfying $v_p(B)> v_p(c)\geq 1$, then $p^3\mid B$.
\end{lemma}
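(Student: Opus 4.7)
The plan is to carry out a direct $p$-adic valuation analysis of the relation $d^2 = c^3 + kB^2$. Setting $a \coloneqq v_p(c)$ and $b \coloneqq v_p(B)$, the hypotheses read $b > a \geq 1$ and the goal is $b \geq 3$.

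The natural split is on whether $a \geq 2$ or $a = 1$. In the first case there is nothing to do, since $b > a \geq 2$ immediately gives $b \geq 3$. The substantive case is $a = 1$, where I would need to rule out $b = 2$.

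To handle $a = 1$, $b = 2$: I would compute the two valuations on the right-hand side, $v_p(c^3) = 3$ and $v_p(kB^2) = v_p(k) + 4 \geq 4$. Since these are unequal, the ultrametric inequality forces $v_p(d^2) = \min\{3,\, v_p(k)+4\} = 3$, which is odd — contradicting the fact that $v_p(d^2) = 2v_p(d)$ must be even. Hence $b = 2$ is impossible, $b \geq 3$, and so $p^3 \mid B$.

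I do not anticipate any genuine obstacle; the argument is essentially forced by the shape of the equation. One small point worth flagging: the sketch above never actually uses that $p$ is odd, so the oddness hypothesis in the statement must be imposed for downstream convenience rather than being a real restriction at this step. If in writing it up the $p = 2$ case did require extra care, it would only be to confirm that no additional cancellation can occur in $c^3 + kB^2$ beyond what the valuation mismatch dictates, which is handled by the ultrametric inequality in exactly the same way.
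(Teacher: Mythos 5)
Your proof is correct and follows essentially the same route as the paper's: reduce to excluding $v_p(c)=1$, $v_p(B)=2$, then note that $v_p(c^3)=3 < v_p(kB^2)$ forces $v_p(d^2)=3$, contradicting parity. Your side remark is also accurate — the argument nowhere uses that $p$ is odd, so the oddness hypothesis is not needed for this step.
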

\begin{proof}
It is enough to show that it is impossible to have an odd prime $p$ satisfying both $v_p(B)=2$ and $v_p(c)=1$. If it is the case that $v_p(B)=2$ and $v_p(c)=1$, from the equation $d^2=c^3+kB^2$, we see that $v_p(c^3)<v_p(kB^2)$, so $2v_p(d)=v_p(d^2)=v_p(c^3)=3$, which is a contradiction.
\end{proof}
If $B$ is cube-free, by Lemma~\ref{lemma:cubefree}, the only prime that can satisfy $v_p(B)> v_p(c)\geq 1$ is $2$, so $g_1\mid 2^2$.

 By Proposition~\ref{prop:reducedbd}, the seminvariant $H$ of the reduced form satisfies 
\[
\lvert H\rvert\leq 2^{\frac{4}{3}}3^{-\frac{1}{2}}g\lvert k\rvert^{\frac{1}{2}},
\]
and the syzygy from~\eqref{eq:syzygy} becomes
\[
\left(\frac{1}{2}U\right)^2=H^3+kg^2a^2.
\]
Writing $H=hg_0$ and  $\frac{1}{2}U=ug_0$, we have
\begin{equation}\label{eq:hbound}
\lvert a\rvert\leq 2^{2}3^{-\frac{3}{4}}g^{\frac{1}{2}}\lvert k\rvert^{\frac{1}{4}},\qquad
\lvert h\rvert\leq 2^{\frac{4}{3}}3^{-\frac{1}{2}}g_1\lvert k\rvert^{\frac{1}{2}},
\end{equation}
\begin{equation}\label{eq:quadrep}
u^2-kg_1^2a^2=g_0h^3.
\end{equation}

\subsection{Proof of Theorem~\ref{theorem:ptbd}}
We first count the number of integral points when $g$ is small.

\begin{lemma}\label{lemma:smallgcd}
Let $K\leq L\leq N$ be positive numbers. Then if $k$ is not a square,
\begin{align*}
&\sum_{B\leq N}\#\left\{(c,d)\in E_B(\ZZ):
\gcd(c,B)\leq L,\ 
\prod_{p\mid \gcd(c,B)}p^{\max\{v_p(B)-v_p(c),0\}}\leq  K
\right\}\\
&\ll_k K^{4}\min\{N^{\frac{2}{3}}L^{\frac{1}{3}}\log L,\ N^{\frac{19}{21}}\log N+L\}+N^{\frac{2}{3}}(\log N)^{1+\epsilon}.
\end{align*}
\end{lemma}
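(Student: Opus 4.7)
The plan is to apply the discriminant-lowering procedure from Lemma~\ref{lemma:reddisc} to transfer the count of $P\in E_B(\ZZ)$ satisfying the gcd constraints into a count of reduced integer-matrix cubic forms paired with representing vectors. First dispose of reducible forms via Lemma~\ref{lemma:reducible}; these contribute $\ll_k N^{2/3}(\log N)^{1+\epsilon}$, so from now on assume $f_P$ is irreducible.

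For each such $P=(c,d)\in E_B(\ZZ)$ with $g_0\leq L$ and $g_1\leq K$, set $g=g_0g_1$ and $M=B/g$ and apply Lemma~\ref{lemma:reddisc} to obtain $F_P$ with $\Delta(F_P)=-4kg^2$ and $F_P(1,0)=M$. By the injectivity lemma, distinct $P$ yield non-equivalent pairs $(F_P,(1,0))$, so after passing to a $\GL_2(\ZZ)$-equivalent reduced form $F$ with representing vector $(x_0,y_0)$ satisfying $F(x_0,y_0)=M$, the task reduces to bounding
\[\sum_{g_0\leq L}\sum_{g_1\leq K}\#\{\text{reduced }F\text{ with data }(g_0,g_1)\}\cdot\#\{(x_0,y_0)\in\ZZ^2:0<F(x_0,y_0)\leq N/g\}.\]

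For the bound $K^{4}N^{2/3}L^{1/3}\log L$, I would bound $\#\{F\}$ using the seminvariant constraints \eqref{eq:hbound} together with the syzygy $u^2-kg_1^2a^2=g_0h^3$ from \eqref{eq:quadrep}. Since $k$ is not a square, $x^2-ky^2$ is the norm form of $\QQ(\sqrt{k})$, so for fixed $h$ the pairs $(u,a)$ correspond to factorisations of $g_0h^3$ into conjugate ideals in $\ZZ[\sqrt{k}]$ (as in the proof of Lemma~\ref{lemma:reducibleform}), yielding $\ll_\epsilon(g_0|h|^3)^\epsilon$ pairs; summing over $|h|\ll g_1$ and multiplying by the number of $b\bmod a$ with $b^2\equiv hg_0\pmod{a}$ controls $\#\{F\}$ polynomially in $g_0,g_1$. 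For each reduced $F$, the count of representing vectors $(x_0,y_0)$ with $0<F(x_0,y_0)\leq N/g$ is $\ll(N/g)^{2/3}$ by the classical area bound for cubic forms. Summing carefully over $g_0\leq L$ and $g_1\leq K$ gives the first bound. For the alternative $K^{4}(N^{19/21}\log N+L)$, I would instead appeal to a uniform integer-point count on the cubic surface cut out by the syzygy together with the representation equation $F(x_0,y_0)=M$, in the style of the Heath-Brown/Salberger determinant method; the exponent $19/21$ is characteristic of such counts, and the $+L$ term absorbs degenerate or low-height contributions.

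The main obstacle is the bookkeeping needed to pin down the precise $K^{4}$ factor---which tracks back to the multiplicity of reduced forms with given $(g_0,g_1)$ and the residue count for $b$ modulo $a$---and, for the second bound, establishing the cubic-surface integer-point estimate with enough uniformity in the coefficients that the summation over small $(g_0,g_1)$ does not blow up. Once both per-$(g_0,g_1)$ bounds are in hand, taking their pointwise minimum and adding the reducible contribution from Lemma~\ref{lemma:reducible} produces the stated estimate.
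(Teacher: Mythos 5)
Your skeleton matches the paper's: dispose of reducible forms via Lemma~\ref{lemma:reducible}, apply the discriminant-lowering map, pass to reduced forms, bound the number of representing vectors by the uniform Thue-inequality count $\ll(N/g_0)^{2/3}$ (this is Thunder's theorem, not merely an area computation --- the uniformity in $F$ matters), and count reduced forms via their seminvariants. For the first bound in the minimum, though, your route diverges from the paper's and is left in a risky state. The paper simply counts the tuples $(a,h,u,g_1)$ trivially in the box given by~\eqref{eq:hbound} and~\eqref{eq:quadrep}, namely $|a|\ll A^{1/2}K^{1/2}$, $|h|\ll K$, $g_1\ll K$, $|u|\ll A^{1/2}K^{3/2}$ on a dyadic range $A\leq g_0<2A$ (using that $h\neq 0$ forces $g_0$ to be determined by the other data), which immediately yields $N^{2/3}A^{1/3}K^4$ per dyadic block. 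Your ideal-factorisation count of $(u,a)$ could be made to work, but ``controls $\#\{F\}$ polynomially in $g_0,g_1$'' is not an acceptable endpoint: any genuinely polynomial dependence on $g_0$ destroys the bound, since $g_0$ ranges up to $L\leq N$; you need at most $g_0^{\epsilon}$ there, together with a unit-counting argument in the real quadratic case ($k>0$) to bound the number of generators of each ideal, which you do not address.

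The second bound $K^4(N^{19/21}\log N+L)$ is where the real gap lies. Invoking the Heath--Brown/Salberger determinant method on a cubic surface, with the remark that $19/21$ is ``characteristic of such counts,'' is not an argument and does not reflect where these terms come from. The paper's mechanism is Evertse's theorem (\cite[Theorem~6.3]{Evertse}): when $|\Delta(F)|\gg m^5$, the Thue inequality $|F(x,y)|\leq m$ has $O(1)$ primitive solutions. Here $|\Delta(F)|\asymp_k g^2$ and $m=N/g_0$, so the hypothesis holds once $g_0\gg N^{5/7}$; in that range each form contributes $O(1)$ points and the total is $\ll LK^4$ (the number of admissible tuples), which is the source of the $+L$ term. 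For $g_0\ll N^{5/7}$ one reuses the first bound with $L$ replaced by $N^{5/7}$, and $2/3+\frac{1}{3}\cdot\frac{5}{7}=\frac{19}{21}$ produces the other term. Without identifying this threshold phenomenon, your proposal does not establish the second branch of the minimum, which is precisely the branch needed (with $L=N(\log N)^{-11/2}$) in the proof of Theorem~\ref{theorem:curvebd}.
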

\begin{proof}
Suppose $P=(c,d)\in E_B(\ZZ)$, $g_0\leq L$, and $g_1\leq  K$. If $F_P$ is reducible, $f_P$ is also reducible. By Lemma~\ref{lemma:reducible}, such $P$ contributes $\ll_k N^{\frac{2}{3}}(\log N)^{1+\epsilon}$.

Now assume that $F_P$ is irreducible.
We count the number of $\GL_2(\ZZ)$-equivalence class of $(F_P,(1,0))$. 
We can transform each $(F_P,(1,0))$ under $\GL_2(\ZZ)$ to some $(F,(x,y))$ such that $F$ has seminvariants bounded as in Proposition~\ref{prop:reducedbd}.
Notice that $F(x,y)=F_P(1,0)=B/g\leq N/g_0$.
For each irreducible integral cubic form $F$, the number of integral solutions $(x,y)$ to 
the Thue inequality 
\[\lvert F(x,y)\rvert\leq \frac{N}{g_0}\]
is $\ll (N/g_0) ^{\frac{2}{3}}$ by~\cite[Theorem~1]{Thunder}.
Therefore each $\GL_2(\ZZ)$-equivalence class of $F$ is associated to $\ll (N/g_0)^{\frac{2}{3}}$ integral points.

Observe that $a,h,u,g_0,g_1$ together determines the $\GL_2(\ZZ)$-equivalence class of $F$. Moreover $h\neq 0$ when $k$ is not a square, and so $g_0$ is determined by $(a,h,u,g_1)$ by~\eqref{eq:quadrep}.
Then
\[\sum_{B\leq N}\#\left\{(c,d)\in E_B(\ZZ):g_0\leq L,\ g_1\leq K\right\}
\ll_k\sum_{\substack{(a,h,u,g_1)\\ g_0\leq L}}\left(\frac{N}{g_0}\right)^{\frac{2}{3}}.\]
Split $g_0\leq L$ into dyadic intervals. Suppose 
$A\leq g_0<2A$, then from~\eqref{eq:hbound} and~\eqref{eq:quadrep}, we see that 
\[\lvert a\rvert \ll_k g^{\frac{1}{2}}\ll A^{\frac{1}{2}}K^{\frac{1}{2}},\qquad 
\lvert h\rvert \ll_k g_1\ll K,\]
\[\lvert u\rvert \ll_k \max\left\{\lvert g_1a\rvert ,\lvert g_0^{\frac{1}{2}}h^{\frac{3}{2}}\rvert \right\}\ll A^{\frac{1}{2}}K^{\frac{3}{2}}.\]
Now bound the number of integral points associated to tuples $(a,h,u,g_1)$ such that $A\leq g_0<2A$, we get
\[\sum_{\substack{(a,h,u,g_1)\\ A\leq g_0<2A}}\left(\frac{N}{g_0}\right)^{\frac{2}{3}}\ll_k 
\sum_{a\ll A^{\frac{1}{2}}K^{\frac{1}{2}}}
\sum_{h\ll K}
\sum_{g_1\ll K}
\sum_{u\ll A^{\frac{1}{2}}K^{\frac{3}{2}}}
\left(\frac{N}{A}\right)^{\frac{2}{3}}
\ll
N^{\frac{2}{3}}A^{\frac{1}{3}}K^{4}.\]
Therefore summing over $A$ up to $L$, we get
\[\sum_{B\leq N}\#\left\{(c,d)\in E_B(\ZZ):g_0\leq L,\ g_1\leq K\right\}\ll_k N^{\frac{2}{3}}L^{\frac{1}{3}}K^{4}\log L.\]

We can do better when $g_0$ is large. By~\cite[Theorem~6.3]{Evertse}, when $\lvert\Delta(F)\rvert\gg m^5$, the number of primitive solutions to $\lvert F(x,y)\rvert\leq m$ is $\ll 1$.
When $g_0\gg N^{\frac{5}{7}}$, we have $\lvert \Delta(F)\rvert=\lvert -4kg^2\rvert\gg N^{\frac{10}{7}}$ and $N/g_0\leq N^{\frac{2}{7}}$.
Therefore when $L\gg N^{\frac{5}{7}}$, we have
\[\sum_{B\leq N}\#\left\{(c,d)\in E_B(\ZZ):N^{\frac{5}{7}}\ll g_0\leq L,\ g_1\leq K\right\}\ll_k\sum_{\substack{(a,h,u,g_1)\\  g_0\leq L}} 1
\ll LK^{4}.
\]
\end{proof}

\begin{remark}
The assumption that $k$ is not a square  is required in Lemma~\ref{lemma:smallgcd} to exclude the reduced forms that result in $H=h=0$. Such forms would contribute extra factors of $\log L$ to the upper bound on the number of integral points when $k$ is a square.
\end{remark}

Theorem~\ref{theorem:ptbd} follows from taking $L=N$ in Lemma~\ref{lemma:smallgcd} and noting that we can take $K= 2^2$ when $B$ is cube-free by Lemma~\ref{lemma:cubefree}.

\subsection{Proof of Theorem~\ref{theorem:curvebd}}
For Theorem~\ref{theorem:curvebd}, we no longer require that $B$ is cube-free. The curves that contains points with small $g_0$ and $g_1$ will be handled by Lemma~\ref{lemma:smallgcd}.
We now want to bound the number of $B$ such that $E_B(\ZZ)$ contains a point with a large $g_1$. Since $g_1$ must divide the cube-full part of $B$ by Lemma~\ref{lemma:cubefree}, it suffices to give an upper bound on the number of $B$ with a large cube-full part. 
\begin{lemma}\label{lemma:largesq}
Let $K\leq N$ be positive integers. Then
\[\#\left\{1\leq B\leq N:  \prod_{p^3\mid B} p^{v_p(B)}\geq K \right\}\ll NK^{-\frac{2}{5}}.\]
\end{lemma}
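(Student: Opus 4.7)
My plan is to sort the integers $B$ by their cube-full part $C := \prod_{p^3 \mid B} p^{v_p(B)}$. Writing $B = Cm$, one has $m$ cube-free with $\gcd(m,C)=1$, and conversely every pair of a cube-full $C$ and a cube-free $m$ coprime to $C$ arises this way; the hypothesis of the lemma is simply $C \geq K$. For each such $C \in [K,N]$ there are at most $N/C$ admissible $m$ with $Cm \leq N$, so
\[\#\left\{1 \leq B \leq N : \prod_{p^3 \mid B} p^{v_p(B)} \geq K\right\} \leq N \sum_{\substack{C \geq K \\ C \text{ cube-full}}} \frac{1}{C}.\]

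To bound the tail sum I would enlarge to all square-full integers (every cube-full integer is square-full) and use the bijective parametrization $n = a^2 b^3$ with $b$ squarefree that identifies the square-full numbers, giving
\[\sum_{\substack{C \geq K \\ C \text{ cube-full}}} \frac{1}{C} \leq \sum_{\substack{a, b \geq 1,\ b \text{ squarefree} \\ a^2 b^3 \geq K}} \frac{1}{a^2 b^3}.\]
I split the outer sum at $b = K^{1/3}$. For $b \geq K^{1/3}$ the inner $a$-sum is $O(1)$, contributing $\sum_{b \geq K^{1/3}} b^{-3} \ll K^{-2/3}$. For $b < K^{1/3}$ the condition $a^2 b^3 \geq K$ forces $a \gg \sqrt{K}/b^{3/2}$, so the $a$-sum is $\ll b^{3/2}/\sqrt{K}$, and summing $b^{-3/2}/\sqrt{K}$ over all squarefree $b$ yields $\ll K^{-1/2}$. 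Altogether the tail is $\ll K^{-1/2}$, which is $\leq K^{-2/5}$ since $K \geq 1$.

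There is no serious obstacle here; the argument reduces entirely to bookkeeping plus the elementary parametrization of square-full integers, and in fact yields the slightly stronger bound $\ll NK^{-1/2}$.
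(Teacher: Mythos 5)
Your proof is correct, and it takes a genuinely different (and slightly sharper) route than the paper. The paper's argument extracts from $B$ a perfect-cube divisor $\prod_{p^3\mid B}p^{3\lfloor v_p(B)/3\rfloor}$, observes via the elementary inequality $\lfloor v/3\rfloor\geq v/5$ for $v\geq 3$ that this cube is at least $K^{3/5}$, and then counts pairs (cube, cofactor) by summing $(N/B_0)^{1/3}$ over cofactors $B_0\leq NK^{-3/5}$; this delivers exactly the stated exponent $K^{-2/5}$ with essentially no structural input. You instead isolate the cube-full part $C$ itself, reduce to bounding the tail $\sum_{C\geq K}1/C$ over cube-full (hence powerful) integers, and control it via the standard bijection $n=a^2b^3$, $b$ squarefree, parametrizing powerful numbers; your split at $b=K^{1/3}$ is handled correctly (note that $b<K^{1/3}$ guarantees the lower limit $\sqrt{K}/b^{3/2}$ exceeds $1$, so the tail estimate $\sum_{a\geq T}a^{-2}\ll T^{-1}$ applies). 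This buys the stronger bound $NK^{-1/2}$, and parametrizing cube-full numbers directly as $a^3b^4c^5$ would even give $NK^{-2/3}$; however, since the lemma is only invoked with $K=(\log N)^{5/4}$ to produce a saving of $(\log N)^{-1/2}$, the paper's weaker exponent $2/5$ is all that is needed, which is presumably why the shorter ad hoc argument was preferred there.
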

\begin{proof}
Writing
\[B=\prod_{p^3\mid B}p^{3\lfloor\frac{1}{3}v_p(k)\rfloor}B_0,\]
we see that by assumption
$\prod_{p^3\mid B}p^{\lfloor\frac{1}{3}v_p(k)\rfloor}\geq\prod_{p^3\mid B}p^{\frac{1}{5}v_p(k)}\geq K^{\frac{1}{5}}$, so $B_0\leq NK^{-\frac{3}{5}}$. Therefore
\[
\#\Big\{1\leq B\leq N:  \prod_{p^3\mid B} p^{v_p(B)}\geq K \Big\}\ll\sum_{B_0\leq  NK^{-\frac{3}{5}}}\left(\frac{N}{B_0}\right)^{\frac{1}{3}}\ll NK^{-\frac{2}{5}}.
\]
\end{proof}
To prove Theorem~\ref{theorem:curvebd}, take $L=N(\log N)^{-\frac{11}{2}}$ and $K=(\log N)^{\frac{5}{4}}$ in Lemma~\ref{lemma:smallgcd} and Lemma~\ref{lemma:largesq}, then 
\[\#\{1\leq B\leq N: 
g_0\leq L \text{ for some }(c,d)\in E_B(\ZZ)
 \}
 \ll_k
 N(\log N)^{-\frac{1}{2}}
 .\]
It remains to count the number of curves which contain some $(c,d)\in E_B(\ZZ)$ with $g_0>L$.

\begin{lemma}\label{lemma:largegcd}
Fix $\epsilon>0$ and an integer $k\neq 0$ that is not a rational square.
Let $N$ and $L$ be positive numbers such that $N^{1-\epsilon}\leq L\leq N$.
Then
\[
\#\left\{1\leq B\leq N:\gcd(c,B)>L 
\text{ for some }(c,d)\in E_B(\ZZ)\right\}
\ll_{k,\epsilon}
N\left(\frac{\log (N/L)}{\log N}\right)^{\frac{1}{2}}.
\]
\end{lemma}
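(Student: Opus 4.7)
For a bad $B$ with $P=(c,d)\in E_B(\ZZ)$ satisfying $g_0\coloneqq\gcd(c,B)>L$, the equation $d^2=c^3+kB^2$ yields $g_0^2\mid d^2$, hence $g_0\mid d$; writing $c=g_0c'$, $B=g_0B'$, $d=g_0d'$ produces
\[
d'^2-kB'^2\;=\;g_0\,c'^3,\qquad \gcd(c',B')=1.
\]
I decompose $g_0=g_{0,1}g_{0,2}$, where $g_{0,1}\coloneqq\prod_{p\mid g_0,\,p\nmid B'}p^{v_p(g_0)}$ is the part of $g_0$ coprime to $B'$ and $g_{0,2}$ collects the remaining prime powers (those primes $p\mid g_0$ with $p\mid B'$, equivalently $v_p(c)<v_p(B)$). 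The plan is to show $g_{0,1}$ is composed of primes $p$ with $\leg{k}{p}=1$ (plus finitely many exceptional primes), which forces the \emph{good part} of $B$ to be large, and then apply a Landau-type density estimate.

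If $p\mid g_{0,2}$ is odd, then $v_p(B)>v_p(c)\geq 1$ and Lemma~\ref{lemma:cubefree} forces $p^3\mid B$. Hence (up to a bounded power of $2$) $g_{0,2}$ divides the cube-full part of $B$, and for a parameter $K$ to be chosen, Lemma~\ref{lemma:largesq} bounds the $B$ with $g_{0,2}>K$ by $\ll NK^{-2/5}$. For the remaining $B$, $g_{0,1}>L/K$; reducing $d'^2-kB'^2=g_0c'^3$ modulo any odd prime $p\mid g_{0,1}$ coprime to $k$, the coprimality $\gcd(B',p)=1$ gives $(d'/B')^2\equiv k\pmod p$, so $\leg{k}{p}=1$. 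Thus $g_{0,1}$ is built from primes $p$ with $\leg{k}{p}=1$ or $p\mid 2k$, so it divides the good part $G(B)\coloneqq\prod_{p\mid B,\,\leg{k}{p}=1\text{ or }p\mid 2k}p^{v_p(B)}$ of $B$, giving $G(B)>L/K$.

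It remains to bound $\#\{B\leq N:G(B)>L/K\}$. Writing $B=G(B)H(B)$ where $H(B)\coloneqq\prod_{p\mid B,\,\leg{k}{p}=-1,\,p\nmid 2k}p^{v_p(B)}$ is the complementary bad part and setting $M\coloneqq NK/L$, the condition rewrites as $H(B)<M$. Since $k$ is not a square, the primes with $\leg{k}{p}=\pm 1$ each have Dirichlet density $\tfrac12$, and the Landau--Selberg--Delange theorem gives $\asymp_k X/\sqrt{\log X}$ for the count of $X$-bounded good-only and bad-only integers. Using $\log(N/h)\asymp\log N$ throughout the range of summation (which holds since $M\leq N^{\epsilon+o(1)}$ when $L\geq N^{1-\epsilon}$), partial summation yields
\[
\#\{B\leq N:H(B)<M\}\;=\;\sum_{\substack{h<M\\h\text{ bad-only}}}\#\{g\leq N/h:g\text{ good-only}\}\;\ll_{k,\epsilon}\;N\sqrt{\frac{\log M}{\log N}}.
\]
Taking $K$ of order $(\log N/\log(N/L))^{5/4}$ balances this against the $NK^{-2/5}$ error and gives the claimed bound $\ll N\sqrt{\log(N/L)/\log N}$.

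The hard step is the Landau-type estimate at the end, which requires uniform density estimates for $k$-good and $k$-bad integers and careful partial summation with the $1/\sqrt{\log(N/h)}$ weight. The mod-$p$ congruence producing $\leg{k}{p}=1$ and the reductions via Lemmas~\ref{lemma:cubefree} and~\ref{lemma:largesq} are routine once the decomposition of $g_0$ is in place.
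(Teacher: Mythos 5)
Your proof is correct in substance but takes a genuinely different route from the paper's. The paper works with the syzygy $u^2-kg_1^2a^2=g_0h^3$ of the \emph{reduced} binary cubic form attached to $P$ and deduces that every prime $p\nmid 2k$ dividing $g_0$ either satisfies $\leg{k}{p}=1$ or divides $g_0$ to even order; it then absorbs the even-order non-residue primes into the factor $m$ of $B$ in \eqref{eq:m}, so that all of $g_0$ divides $m$ and a single Landau-type count over $m> L$ finishes the proof with no auxiliary parameter. You instead descend the Mordell equation itself to $d'^2-kB'^2=g_0c'^3$ with $\gcd(c',B')=1$, which is more elementary and self-contained (it never touches the reduced form or its seminvariants), and you dispose of the non-residue primes dividing $g_0$ by observing that they satisfy $v_p(B)>v_p(c)\geq 1$, hence $p^3\mid B$ by Lemma~\ref{lemma:cubefree}, and then invoking Lemma~\ref{lemma:largesq} at a cost of $NK^{-2/5}$. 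Both routes end with the same Landau--Selberg--Delange count, and your intermediate steps all check out: $g_0\mid d$, $\gcd(c',B')=1$, the congruence $(d'/B')^2\equiv k\pmod p$ for $p\mid g_{0,1}$, $p\nmid 2k$, and the unique factorisation $B=G(B)H(B)$. (For the prime $2$: either $v_2(B)\leq 2$, so $v_2(g_{0,2})\leq 1$, or $2^{v_2(B)}$ already sits inside the cube-full part, which justifies your ``bounded power of $2$''.)

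The one genuine cost of the detour through $K$ is that your final bound is $N\sqrt{(\log(N/L)+\log K)/\log N}+NK^{-2/5}$, and forcing $NK^{-2/5}\ll N\sqrt{\log(N/L)/\log N}$ requires $\log K\gg\log\log N$ once $\log(N/L)\leq(\log N)^{1/2}$, say. Hence when $\log(N/L)=o(\log\log N)$, i.e.\ $N/L=(\log N)^{o(1)}$, your bound degrades to $N\sqrt{\log\log N/\log N}$ and does not recover the lemma as stated over the full range $N^{1-\epsilon}\leq L\leq N$. This does not affect the application, where $L=N(\log N)^{-11/2}$ and $\log(N/L)\asymp\log\log N$ so that $\log K\ll\log(N/L)$, but it is worth flagging; the paper's parameter-free argument, which keeps the even-order non-residue primes inside the Landau count rather than shunting them to Lemma~\ref{lemma:largesq}, avoids the issue entirely.
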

\begin{proof}
We now need to make use of the assumption that $k$ is not a square. 
Write $B=mn$, where 
\begin{align}\label{eq:m}
m&=
\prod_{p\mid \gcd(B,2k)}p^{v_p(B)}
\prod_{\substack{\text{odd }p\mid B\\ \leg{k}{p}=1}}p^{v_p(B)}
\prod_{\substack{\text{odd }p\mid B\\ \leg{k}{p}=-1}}p^{2\lfloor \frac{1}{2}v_p(B)\rfloor}\text{ and}\\\label{eq:n}
n&=\prod_{\substack{\text{odd }p\mid B\\ \leg{k}{p}=-1\\v_p(B)\text{ odd}}}p
.\end{align}
From~\eqref{eq:quadrep}, we see that if $p\mid g_0$, either $p\mid k$,  $\leg{k}{p}=1$ or $v_p(g_0)$ is even. Since $g_0\mid m$, so $m\geq g_0$. Any prime $p\nmid 2k$ that divides $m$ with odd order satisfies $\leg{k}{p}=1$, and $n$ is only divisible by primes $p$ satisfying $\leg{k}{p}=-1$.
By a result of Landau~\cite{LandauRep} (or see~\cite[Theorem~2.8] {SerreFrob} for a more general statement for sets of primes satisfying Frobenian conditions), we see that the number of integers $m\leq N$ of the form~\eqref{eq:m} is $\sim c_k N(\log N)^{-\frac{1}{2}}$, since $k\neq 1$. Similarly the number of integers $n\leq N$ of the form~\eqref{eq:n} is $\sim c'_k N(\log N)^{-\frac{1}{2}}$.
We count the number of $B\leq N$ that are divisible by some $m\geq L$.
By partial summation
\[\begin{split}
\# &\{1\leq B\leq N:g_0> L \text{ for some }(c,d)\in E_B(\ZZ) \}\\
&\ll_k\sum_{n\leq N/L} \sum_{m\leq N/n} 1
\ll \sum_{n<N/L} \frac{N}{n\sqrt{\log(N/n)}}
\ll_{\epsilon} \frac{N}{\sqrt{\log N}}\sum_{n<N/L} \frac{1}{n}\\
&\ll_k \frac{N}{\sqrt{\log N}}\left(\frac{1}{\sqrt{\log (N/L)}}+ \int_{t<N/L}\frac{dt}{t\sqrt{\log t}}\right)
\ll N\left(\frac{\log (N/L)}{\log L}\right)^{\frac{1}{2}}.
\end{split}\]
\end{proof}
Putting in $L=N(\log N)^{-\frac{11}{2}}$ in Lemma~\ref{lemma:largegcd} gives in upper bound of \[\#\{1\leq B\leq N: 
g_0>L \text{ for some }(c,d)\in E_B(\ZZ)
 \}\ll_k N\left(\frac{\log\log N}{\log N}\right)^{\frac{1}{2}}.\]
This completes the proof of Theorem~\ref{theorem:curvebd}.

\begin{bibdiv}
\begin{biblist}

\bib{Alpoge}{article}{
      author={Alpoge, L.},
       title={The average number of integral points on elliptic curves is bounded},
        date={2014},
      eprint={1412.1047},
}

\bib{AlpogeHo}{article}{
      author={Alpoge, L.},
      author={Ho, W.},
       title={The second moment of the number of integral points on elliptic curves is bounded},
        date={2018},
      eprint={1807.03761},
}

\bib{BGMordell}{article}{
      author={Bennett, Michael~A.},
      author={Ghadermarzi, Amir},
       title={Mordell's equation: a classical approach},
        date={2015},
     journal={LMS J. Comput. Math.},
      volume={18},
      number={1},
       pages={633\ndash 646},
         url={https://doi.org/10.1112/S1461157015000182},
}

\bib{BrumerMcGuinness}{article}{
      author={Brumer, Armand},
      author={McGuinness, Ois\'{\i}n},
       title={The behavior of the {M}ordell-{W}eil group of elliptic curves},
        date={1990},
        ISSN={0273-0979},
     journal={Bull. Amer. Math. Soc. (N.S.)},
      volume={23},
      number={2},
       pages={375\ndash 382},
  url={https://doi.org/10.1090/S0273-0979-1990-15937-3},
}

\bib{ChanCongruent}{article}{
      author={Chan, Stephanie},
       title={The average number of integral points on the congruent number
  curves},
        date={2021},
      eprint={2112.01615},
}

\bib{Cremona}{article}{
      author={Cremona, J.~E.},
       title={Reduction of binary cubic and quartic forms},
        date={1999},
     journal={LMS J. Comput. Math.},
      volume={2},
       pages={64\ndash 94},
         url={https://doi.org/10.1112/S1461157000000073},
}

\bib{DH2}{article}{
      author={Davenport, H.},
      author={Heilbronn, H.},
       title={On the density of discriminants of cubic fields. {II}},
        date={1971},
        ISSN={0962-8444},
     journal={Proc. Roy. Soc. London Ser. A},
      volume={322},
      number={1551},
       pages={405\ndash 420},
         url={https://doi.org/10.1098/rspa.1971.0075},
      review={\MR{491593}},
}

\bib{Evertse}{book}{
      author={Evertse, J.-H.},
       title={Upper bounds for the numbers of solutions of {D}iophantine
  equations},
      series={Mathematical Centre Tracts},
   publisher={Mathematisch Centrum, Amsterdam},
        date={1983},
      volume={168},
        ISBN={90-6196-265-X},
}

\bib{EvertseSilverman}{article}{
      author={Evertse, J.-H.},
      author={Silverman, J.~H.},
       title={Uniform bounds for the number of solutions to {$Y^n=f(X)$}},
        date={1986},
        ISSN={0305-0041},
     journal={Math. Proc. Cambridge Philos. Soc.},
      volume={100},
      number={2},
       pages={237\ndash 248},
         url={https://doi.org/10.1017/S0305004100066068},
}

\bib{LandauRep}{article}{
      author={Landau, Edmund},
       title={\"{U}ber die {E}inteilung der positiven ganzen {Z}ahlen in vier
  {K}lassen nach der {M}indestzahl der zu ihrer additiven {Z}usammensetzung
  erforderlichen {Q}uadrate},
        date={1908},
     journal={Arch. der Math. und Phys. (3)},
      volume={13},
       pages={227\ndash 260305\ndash 312},
}

\bib{Mordell}{book}{
      author={Mordell, L.~J.},
       title={Diophantine equations},
      series={Pure and Applied Mathematics, Vol. 30},
   publisher={Academic Press, London-New York},
        date={1969},
}

\bib{SerreFrob}{article}{
      author={Serre, Jean-Pierre},
       title={Divisibilit\'{e} de certaines fonctions arithm\'{e}tiques},
        date={1976},
        ISSN={0013-8584},
     journal={Enseign. Math. (2)},
      volume={22},
      number={3-4},
       pages={227\ndash 260},
}

\bib{Silverman}{book}{
      author={Silverman, Joseph~H.},
       title={The arithmetic of elliptic curves},
     edition={Second},
      series={Graduate Texts in Mathematics},
   publisher={Springer, Dordrecht},
        date={2009},
      volume={106},
         url={http://dx.doi.org/10.1007/978-0-387-09494-6},
}

\bib{StewartXiao}{article}{
      author={Stewart, C.~L.},
      author={Xiao, Stanley~Yao},
       title={On the representation of integers by binary forms},
        date={2019},
        ISSN={0025-5831},
     journal={Math. Ann.},
      volume={375},
      number={1-2},
       pages={133\ndash 163},
         url={https://doi.org/10.1007/s00208-019-01855-y},
}

\bib{StewartXiaokfree}{article}{
      author={Stewart, Cameron~L.},
      author={Xiao, Stanley~Yao},
       title={On the representation of {$k$}-free integers by binary forms},
        date={2021},
        ISSN={0213-2230},
     journal={Rev. Mat. Iberoam.},
      volume={37},
      number={2},
       pages={723\ndash 748},
         url={https://doi.org/10.4171/rmi/1213},
}

\bib{Thunder}{article}{
      author={Thunder, Jeffrey~Lin},
       title={The number of solutions to cubic {T}hue inequalities},
        date={1994},
        ISSN={0065-1036},
     journal={Acta Arith.},
      volume={66},
      number={3},
       pages={237\ndash 243},
         url={https://doi.org/10.4064/aa-66-3-237-243},
}

\bib{Watkins}{article}{
      author={Watkins, Mark},
       title={Some heuristics about elliptic curves},
        date={2008},
        ISSN={1058-6458},
     journal={Experiment. Math.},
      volume={17},
      number={1},
       pages={105\ndash 125},
  url={http://projecteuclid.org/euclid.em/1227031901},
}

\bib{Young}{article}{
      author={Young, Matthew~P.},
       title={The number of solutions to {M}ordell's equation in constrained
  ranges},
        date={2015},
        ISSN={0025-5793},
     journal={Mathematika},
      volume={61},
      number={3},
       pages={708\ndash 718},
         url={https://doi.org/10.1112/S0025579314000308},
}

\end{biblist}
\end{bibdiv}

\end{document}